\DeclareRobustCommand{\VANDER}[3]{#2}
\newcommand{\dif}{\,\textnormal{d}}
\newtheorem{theorem}{Theorem}
\newtheorem{corollary}[theorem]{Corollary}
\newtheorem{lemma}[theorem]{Lemma}
\newtheorem{proposition}[theorem]{Proposition}
\newtheorem{definition}{Definition}
\newtheorem{example}{Example}
\renewcommand{\H}{\mathcal H}
\DeclareMathOperator{\F}{\mathcal F}
\DeclareMathOperator{\B}{\mathcal B}
\DeclareMathOperator{\G}{\mathcal G}
\DeclareMathOperator{\C}{\mathcal C}
\newcommand{\PM}{\ensuremath{\bar{P}_0}}
\newcommand{\QM}{\ensuremath{\bar{P}_1}}
\newcommand{\KM}{\ensuremath{\bar{P}_k}}
\newcommand{\initialspace}{\ensuremath{\langle {\cal X}^m \rangle}}
\newcommand{\samplespacen}{\ensuremath{\langle {\cal X}^n \rangle}}
\newcommand{\initialX}{\ensuremath{\langle X^{(m)} \rangle}}
\newcommand{\initialsigma}{\ensuremath{\langle \Sigma^{(m)} \rangle}}
\newcommand{\Pn}[1][n]{\bar{P}_{0}^{(#1)}}
\newcommand{\Qn}[1][n]{\bar{P}_{1}^{(#1)}}
\newcommand{\genericP}[1]{\ensuremath{P_{#1,e}}}
\DeclareMathOperator{\A}{\mathcal{A}}
\DeclareMathOperator{\X}{\mathcal{X}}
\newcommand{\R}{\ensuremath{\mathbb{R}}}
\newcommand{\rnd}[2]{\frac{\dif {#1}}{\dif {#2}}}
\newcommand{\ind}[1][\cdot]{\ensuremath{\mathbbm{1}_{\{#1\}}}}
\newcommand{\mumarginal}[3]{\ensuremath{{\bar{P}_{#1}^{[{#2}_{#3}]}}}}
\newcommand{\mumargcond}[4]{\ensuremath{{\bar{P}_{#1}^{[{#2}_{#3}]}
			(\cdot \mid #4)}}}
\newcommand{\mugeneral}[3]{\ensuremath{{#1}^{[{#2}_{#3}]}}}
\newcommand{\mug}[3]{\ensuremath{{P}_{#1}^{[{#2}_{#3}]}}}
\newcommand{\mugcond}[4]{\ensuremath{{P}_{#1}^{[{#2}_{#3}]}(\cdot \mid #4 )}}
\newcommand{\mugencond}[4]{\ensuremath{{#1}^{[{#2}_{#3}]}(\cdot \mid #4 )}}
\newcommand{\cX}{\ensuremath{\mathcal X}}
\DeclareBoldMathCommand{\AB}{A} \DeclareBoldMathCommand{\vmu}{\mu}
\DeclareBoldMathCommand{\zeros}{0}
\DeclareBoldMathCommand{\SSigma}{\Sigma}
\author{Allard Hendriksen\footnote{CWI, Amsterdam, allard.hendriksen@cwi.nl}, Rianne de Heide\footnote{CWI, Amsterdam and Leiden University, The Netherlands, r.de.heide@cwi.nl pdg@cwi.nl}, Peter Gr\"unwald$^\dag$}
\date{\today}
\title{Optional Stopping with Bayes Factors: a categorization and extension of folklore results,
	with an application to invariant situations}
\begin{document}

\maketitle
%\tableofcontents

\begin{abstract}
  It is often claimed that Bayesian methods, in particular Bayes
factor methods for hypothesis testing, can deal with optional
stopping.  We first give an overview, using elementary probability
theory, of three different mathematical meanings that various
authors give to this claim: (1) stopping rule {\em independence},
(2) posterior {\em calibration\/} and (3) (semi-) {\em frequentist
	robustness to optional stopping}. We then prove theorems to the
effect that these claims do indeed hold in a general
measure-theoretic setting. For claims of type (2) and (3), such
results are new. By allowing for non-integrable measures based on
improper priors, we obtain particularly strong results for the
practically important case of models with nuisance parameters
satisfying a group invariance (such as location or scale). We also
discuss the practical relevance of (1)--(3), and conclude that
whether Bayes factor methods actually perform well under
optional stopping crucially depends on details of models, priors and
the goal of the analysis.
%  When equipped with the right Haar prior, calibration and
%  semi-frequentist robustness to optional stopping hold uniformly
%  irrespective of the value of the underlying nuisance parameter, as
%  long as the stopping rule satisfies a certain intuitive property.
\end{abstract}

\section{Introduction}
%PETER NOTATIE we willen denk ik alle Bayesian P's P(..|H_0) en P(H_0|..) rechtopstaand
% en alle andere schuin!?
In recent years, a surprising number of scientific results have failed
to hold up to continued scrutiny. Part of this `replicability crisis'
may be caused by practices that ignore the assumptions of traditional
(frequentist) statistical methods \citep{john-2012-measur-preval}. One
of these assumptions is that the experimental protocol should be
completely determined upfront.  In practice, researchers often adjust
the protocol due to unforeseen circumstances or collect data until a
point has been proven.  This practice, which is referred to as
\emph{optional stopping}, can cause true hypotheses to be wrongly
rejected much more often than these statistical methods promise.

Bayes factor hypothesis testing has long been advocated as an
alternative to traditional testing that can resolve several of its
problems; in particular, it was claimed early on that Bayesian methods
continue to be valid under optional stopping
\citep{lindley-1957-statis-parad,RaiffaS61,edwards-1963-bayes-statis}. In
particular, the latter paper claims that (with Bayesian methods) ``it
is entirely appropriate to collect data until a point has been proven
or disproven, or until the data collector runs out of time, money, or
patience.'' In light of the replicability crisis, such claims have
received much renewed interest
\citep{wagenmakers-2007-pract-solut,rouder-2014-option,schonbrodt-2017-sequen-hypot,yu-2013-when-decis,sanborn-2013-frequen-implic}. But
what do they mean mathematically?  It turns out that different authors
mean quite different things by `Bayesian methods handle optional
stopping'; moreover, such claims are often shown to hold only in an
informal sense, or in restricted contexts.  Thus, the first goal of
the present paper is to give a systematic overview and formalization
of such claims in a simple, expository setting and, still in this
simple setting, explain their relevance for practice: can we
effectively rely on Bayes factor testing to do a good job under
optional stopping or not? As we shall see, the answer is subtle. The
second goal is to extend the reach of such claims to more general
settings, for which they have never been formally verified and for
which verification is not always trivial.

\paragraph{Overview}
In {\em Section~\ref{sec:simple}}, we give a systematic overview of
what we identified to be the three main mathematical senses in which
Bayes factor methods can handle optional stopping, which we call {\em
	$\tau$-independence, calibration, and (semi-)frequentist}.  We first
do this in a setting chosen to be as simple as possible --- finite
sample spaces and strictly positive probabilities --- allowing for
straightforward statements and proofs of results. In {\em
	Section~\ref{sec:discussion}}, we explain the practical relevance of
these three notions. It turns out that whether or not we can say that
`the Bayes factor method can handle optional stopping' in practice is
a subtle matter, depending on the specifics of the given situation:
what models are used, what priors, and what is the goal of the
analysis.  We can thus explain the paradox that there have also been
claims in the literature that Bayesian methods {\em cannot\/} handle
optional stopping in certain cases; such claims were made, for example
by \cite{yu-2013-when-decis,sanborn-2013-frequen-implic}, and also by
ourselves \citep{heide-2017-why-option}. We also briefly discuss {\em
	safe tests\/} \citep{safetesting} which can be interpreted as a
novel method for determining priors that behave better under frequentist
optional stopping.  The paper has been organized in such a way that
these first two sections can be read with only basic knowledge of
probability theory and Bayesian statistics. For convenience, we
illustrate Section~\ref{sec:discussion} with an informally stated
example involving group invariances, so that the reader gets a
complete overview of what the later, more mathematical sections are
about.

{\em Section~\ref{sec:general}\/} extends the statements and
results to a much more general setting allowing for a wide range of
sample spaces and measures, including measures based on {\em improper
	priors}. These are priors that are not integrable, thus not defining
standard probability distributions over parameters, and as such they
cause technical complications. Such priors are indispensable within
the recently popularized \emph{default Bayes factors} for common
hypothesis tests
\citep{rouder-2009-bayes,rouder-2012-default-bayes,jamil-2016-default-gunel}.

In {\em Section~\ref{sec:group}}, we provide stronger results for the
case in which both models satisfy the same group invariance. Several
(not all) default Bayes factor settings concern such situations;
prominent examples are Jeffreys' (\citeyear{jeffreys-1961-theory-of})
Bayesian one- and two-sample $t$-tests, in which the models are
location and location-scale families, respectively. Many more examples
are given by Berger and various collaborators
\citep{berger1998bayes,dass-2003-unified-condit,bayarri2012criteria,bayarri2016rejection}. These
papers provide compelling arguments for using the (typically improper)
{\em right Haar prior\/} on the nuisance parameters in such
situations; for example, in Jeffreys' one-sample $t$-test, one puts a
right Haar prior on the variance. In particular, in our restricted
context of Bayes factor hypothesis testing, the right Haar prior does
not suffer from the {\em marginalization paradox\/}
\citep{dawid1973marginalization} that often plagues Bayesian inference
based on improper priors (we briefly return to this point in the conclusion).

Haar priors and group invariant
models were studied extensively by
\cite{eaton-1989-group-invar,andersson-1982-distr-maxim,wijsman-1990-invar},
whose results this paper depends on considerably.  When nuisance
parameters (shared by both $H_0$ and $H_1$) are of suitable form and
the right Haar prior is used, we can strengthen the results of
Section~\ref{sec:general}: they now hold uniformly for all possible
values of the nuisance parameters, rather than in the marginal, `on
average' sense we consider in Section~\ref{sec:general}. However ---
and this is an important insight --- we {\em cannot take arbitrary
	stopping rules\/} if we want to handle optional stopping in this
strong sense: our theorems only hold if the stopping rules satisfy a
certain intuitive condition, which will hold in many but not all
practical cases: the stopping rule must be ``invariant'' under some
group action. For instance, a rule such as `stop as soon as the Bayes
factor is $\geq 20$' is allowed, but a rule (in the Jeffreys'
one-sample $t$-test) such as `stop as soon as $\sum x_i^2 \geq 20$' is
not.

The paper ends with supplementary material, comprising Section~\ref{app:groupprel} containing basic background material about groups, and Section~\ref{app:proofs} containing all longer mathematical
proofs.

\paragraph{Scope and Novelty} Our analysis is restricted to Bayesian
testing and model selection using the Bayes factor method; we do not
make any claims about other types of Bayesian inference. Some of the
results we present were already known, at least in simple settings; we
refer in each case to the first appearance in the literature that we
are aware of. In particular, our results in
Section~\ref{sec:generalindependence} are implied by earlier results
in the seminal work by \cite{BergerW88} on the likelihood principle;
we include them any way since they are a necessary building block for
what follows.  The real mathematical novelties in the paper are the
results on calibration and (semi-) frequentist optional stopping with
general sample spaces and improper priors and the results on the group
invariance case
(Section~\ref{sec:generalcalibration}--\ref{sec:group}). These results
are truly novel, and --- although perhaps not very surprising --- they
do require substantial additional work not covered by
\cite{BergerW88}, who are only concerned with $\tau$-independence. In
particular, the calibration results require the notion of the
`posterior odds of some particular posterior odds', which need to be
defined under arbitrary stopping times. The difficulty here is that,
in contrast to the fixed sample sizes where even with
continuous-valued data, the Bayes factor and the posterior odds
usually have a distribution with full support, with variable stopping
times, the support may have `gaps' at which its density is zero or
very near zero. An additional difficulty encountered in the group
invariance case is that one has to define filtrations based on maximal
invariants, which requires excluding certain measure-zero points from the
sample space.
\section{The Simple Case}
\label{sec:simple}
Consider a finite set ${\cal X}$ and a sample space $\Omega \coloneqq {\cal
	X}^T$ where $T$ is some very large (but in this section, still
finite) integer. One observes a {\em sample} $x^{\tau} \equiv x_1, \ldots,
x_{\tau}$, which is an initial segment of $x_1, \ldots, x_T \in \X^T$. In the simplest case, $\tau = n$ is a sample size that is
fixed in advance; but, more generally $\tau$ is a {\em stopping
	time\/} defined by some stopping rule (which may or may not be known
to the data analyst), defined formally below.

We consider a hypothesis testing scenario where we wish to distinguish
between a null hypothesis \(H_0\) and an alternative hypothesis
\(H_1\). Both $H_0$ and $H_1$ are sets of distributions on $\Omega$, and they are each represented by unique probability
distributions \(\PM\) and \(\QM\) respectively.  Usually, these
are taken to be Bayesian marginal distributions,
defined as follows. First one writes, for both $k \in \{0,1\}$, $H_k =
\{P_{\theta \mid k} \mid \theta \in \Theta_k\}$ with `parameter spaces'
$\Theta_k$; one then defines or assumes some prior probability
distributions $\pi_0$ and $\pi_1 $ on $\Theta_0$ and $\Theta_1$,
respectively. The Bayesian marginal probability distributions are then the
corresponding marginal distributions, i.e.\ for any set $A \subset
\Omega$ they satisfy:
\begin{align}\label{eq:bayesfactor}
\PM(A) = \int_{\Theta_0} P_{\theta|0}(A) \dif \pi_0(\theta) \ \ ;\ \
\QM(A) = \int_{\Theta_1} P_{\theta|1}(A) \dif \pi_1(\theta).
\end{align}
For now we also further assume that for every $n\leq T$, every $x^n
\in {\cal X}^n$, $\PM(X^n = x^n) > 0$ and $\QM(X^n = x^n) > 0$ (full
support), where here, as below, we use random variable notation, $X^n =
x^n$ denoting the event $\{x^n \} \subset \Omega$.  We note that there
exist approaches to testing and model choice such as testing by
nonnegative martingales \citep{shafer2011test,PasG18} and minimum
description length \citep{BarronRY98,Grunwald07} in which the $\PM$ and $\QM$ may
be defined in different (yet related) ways. Several of the  results below
extend to general $\PM$ and $\QM$; we return to this point at the end of the paper, in Section~\ref{sec:concluding remarks}. In all
cases, we further assume that we have determined an additional
probability mass function $\pi$ on $\{H_0,H_1\}$, indicating the prior
probabilities of the hypotheses.  The evidence in favor of $H_1$
relative to $H_0$ given data $x^{\tau}$ is now measured either by the
{\em Bayes factor\/} or the {\em posterior odds}.  We now give the standard definition of these quantities
for the case that $\tau = n$, i.e., that the sample size is fixed in
advance. First, noting that all conditioning below is on events of
strictly positive probability, by Bayes' theorem, we can write for any
$A \subset \Omega$,
\begin{align}\label{eq:simple}
\frac{\pi(H_1 \mid A)}{\pi(H_0 \mid A)}
&=   \frac{P(A \mid H_1)}{P(A  \mid H_0)}
\cdot     \frac{\pi(H_1)}{\pi(H_0)},
\end{align}
where here, as in the remainder of the paper, we use the symbol $\pi$
to denote not just prior, but also posterior distributions on
$\{H_0, H_1\}$.  In the case that we observe $x^n$ for fixed $n$, the
event $A$ is of the form $X^n = x^n$. Plugging this into
\eqref{eq:simple}, the left-hand side becomes the standard definition
of {\em posterior odds}, and the first factor on the right is called
the {\em Bayes factor}.

\subsection{First Sense of Handling Optional Stopping: $\tau$-Independence}
Now, in reality we do not necessarily observe $X^n = x^n$ for fixed
$n$ but rather $X^{\tau} = x^{\tau}$ where $\tau$ is a stopping time
that may itself depend on (past) data (and that in some cases may in
fact be unknown to us).
This stopping time may be defined in terms of a {\em
	stopping rule\/} $f: \bigcup_{i \geq 0}^T \X^i \rightarrow \{ {\tt
	stop}, {\tt continue} \}$. $\tau \equiv \tau(x^T)$ is then defined
as the random variable which, for any sample $x_1, \ldots, x_T$,
outputs the smallest $n$ such that $f(x_1, \ldots, x_n) = {\tt stop}$.
For any given stopping time $\tau$, any $1 \leq n \leq T$ and sequence
of data $x^n = (x_1, \ldots, x_n)$, we say that {\em $x^n$ is
	compatible with $\tau$} if it satisfies $X^n = x^n \Rightarrow \tau
= n$. We let ${\cal X}^{\tau} \subset \bigcup_{i=1}^T {\cal X}^i$ be
the set of all sequences compatible with $\tau$.

Observations take the form $X^{\tau} = x^{\tau}$, which is equivalent to the event \mbox{$X^n=x^n ; \tau = n$}
for some $n$ and some $x^n \in \X^n$ which of necessity must be
compatible with $\tau$.
We can thus instantiate \eqref{eq:simple} to
\begin{align}
\nonumber
\frac{\pi(H_1
	\mid X^n = x^n, \tau = n)}{\pi(H_0 \mid X^n = x^n,\tau = n)}
&= \frac{P(\tau = n
	\mid X^n = x^n, H_1) \cdot \pi(H_1 \mid X^n = x^n)}{P(\tau = n
	\mid X^n = x^n, H_0)\cdot \pi(H_0 \mid X^n = x^n)} = \\
% \frac{\pi(H_1
% \mid X^n = x^n, \tau = n)}{\pi(H_0 \mid X^n = x^n,\tau = n)}
&= \frac{\pi(H_1 \mid X^n = x^n)}{\pi(H_0 \mid X^n = x^n)}.
\label{eq:simplegivenn}
\end{align}
where in the first equality we used Bayes' theorem (keeping $X^n =x^n$ on the right of the conditioning bar throughout); the second equality stems from the fact that $X^n = x^n$ logically implies $\tau =n$, since $x^n$ is  compatible with $\tau$; the probability $P(\tau = n \mid X^n = x^n, H_j)$ must therefore be $1$ for $j = 0,1$.
Combining \eqref{eq:simplegivenn} with Bayes' theorem we get:
\begin{equation}\label{eq:simplefinal}
\overset{\gamma(x^n)}{\overbrace{{\frac{\pi(H_1
				\mid X^n = x^n, \tau = n)}{\pi(H_0 \mid X^n = x^n,\tau = n)}}}} =
\overset{\beta(x^n)}{\overbrace{{\frac{\QM(X^n = x^n)}{\PM(X^n = x^n)}}}}  \cdot \frac{\pi(H_1)}{\pi(H_0)}
\end{equation}
where we introduce the notation $\gamma(x^n)$ for the posterior odds and
$\beta(x^n)$ for the Bayes factor based on sample $x^n$,
calculated as if $n$ were fixed in advance.\footnote{A slightly different way to get to \eqref{eq:simplefinal}, which some may  find even simpler, is to start with $\PM(X^n = x^n, \tau = n) = \PM(X^n = x^n)$ (since $X^n= x^n$ implies $\tau = n$), whence $\pi(H_j \mid X^n = x^n, \tau = n) \propto \bar{P}_j(X^n = x^n,\tau=n) \pi(H_j)= \bar{P}_j(X^n = x^n) \pi(H_j)$.}

We see that the stopping rule plays no role in the expression on the
right. Thus, we have shown that, for any two stopping times $\tau_1$
and $\tau_2$ that are both compatible with some observed $x^n$, the
posterior odds one arrives at will be the same irrespective of whether
$x^n$ came to be observed because $\tau_1$ was used or if $x^n$ came
to be observed because $\tau_2$ was used. We say that the posterior
odds do not depend on the stopping rule $\tau$ and call this property
{\em $\tau$-independence}. Incidentally, this also
justifies that we write the posterior odds as
$\gamma(x^n)$,  a function of $x^n$ alone, without referring to the
stopping time $\tau$.

The fact that the posterior odds given $x^n$ do not depend on the
stopping rule is the first (and simplest) sense in which Bayesian
methods handle optional stopping. It has its roots in the {\em
	stopping rule principle\/}, the general idea that the conclusions
obtained from the data by `reasonable' statistical methods should not
depend on the stopping rule used. This principle was probably first
formulated by Barnard
(\citeyear{barnard1947review,barnard1949statistical});
\cite{barnard1949statistical} very implicitly showed that, under some
conditions, Bayesian methods satisfy the stopping rule principle (and
hence satisfy $\tau$-independence).  Other early sources are
\citet{lindley-1957-statis-parad} and
\citet{edwards-1963-bayes-statis}.  Lindley gave an informal proof in
the context of specific parametric models; in
Section~\ref{sec:generalindependence} we show that, under some
regularity conditions, the result indeed remains true for general
$\sigma$-finite $\PM$ and $\QM$. A special case of our result
(allowing continuous-valued sample spaces but not general measures)
was proven by \cite{RaiffaS61}, and a more general statement about the
connection between the `likelihood principle' and the 'stopping rule
principle' which implies our result in
Section~\ref{sec:generalindependence} can be found in the seminal work
\citep{BergerW88}, who also provide some historical context. Still,
even though not new in itself, we include our result on
$\tau$-independence with general sample spaces and measures since it
is the basic building block of our later results on calibration and
semi-frequentist robustness, which are new.

Finally, we should note that both \cite{RaiffaS61} and
\cite{BergerW88} consider more general stopping rules, which can map to
a probability of stopping instead of just $\{ {\tt stop}, {\tt
	continue} \}$. Also, they allow the stopping rule itself to be
parameterized: one deals with a collection of stopping rules $\{f_{\xi}: \xi \in \Xi\}$ with corresponding stopping times $\{\tau_{\xi}: \xi \in
\Xi\}$, where the parameter $\xi$ is equipped with a prior such that $\xi$ and
$H_j$ are required to be a priori independent. Such
extensions are straightforward to incorporate into our development as
well (very roughly, the second equality in \eqref{eq:simplegivenn} now follows
because, by conditional independence, we must have that $P(\tau_{\xi}
= n \mid X^n = x^n, H_1) = P(\tau_{\xi} = n \mid X^n = x^n, H_0)$); we will not go into such extensions any further in this paper.

\subsection{Second Sense of Handling Optional Stopping: Calibration}
\label{sec:simplecalibration}
An alternative definition of handling optional stopping was introduced
by \cite{rouder-2014-option}. Rouder calls $\gamma(x^n)$ the
\emph{nominal\/} posterior odds calculated from an obtained sample
\(x^n\), and defines the \emph{observed posterior odds} as
\begin{align*}
\frac{\pi(H_1 \mid \gamma(x^n) =c)}{\pi(H_0 \mid \gamma(x^n) =c)}
\end{align*}
as the posterior odds given the nominal odds. Rouder first notes that,
at least if the sample size is fixed in advance to $n$, one expects
these odds to be equal. For instance, if an obtained sample yields
nominal posterior odds of 3-to-1 in favor of the alternative
hypothesis, then it must be 3 times as likely that the sample was
generated by the alternative probability measure. In the terminology
of \cite{heide-2017-why-option}, Bayes is {\em calibrated\/} for a
fixed sample size $n$. Rouder then goes on to note that, if $n$ is
determined by an arbitrary stopping time $\tau$ (based for example on
optional stopping), then the odds will still be equal --- in this
sense, Bayesian testing is well-behaved in the calibration sense
irrespective of the stopping rule/time. Formally, the requirement that the nominal and
observed posterior odds be equal leads us to define the
\emph{calibration hypothesis}, which postulates that $c = \frac{P(H_1
	\mid \gamma=c)}{P(H_0 \mid \gamma=c)}$ holds for any \(c > 0\) that
has non-zero probability. For simplicity, for now we
only consider the case with equal prior odds for $H_0$ and $H_1$ so
that $\gamma(x^n) = \beta(x^n)$. Then  the calibration hypothesis
says that, for arbitrary stopping time $\tau$, for every $c$ such that $\beta(x^{\tau}) = c$ for some $x^{\tau} \in \X^{\tau}$, one has
\begin{align}\label{eq:weakcalibration}
c &=
\frac{P(\beta(x^{\tau}) =c \mid H_1 ) }{P(\beta(x^\tau) =c \mid H_0)}.
\end{align}
In the present simple setting, this hypothesis is easily shown to
hold, because we can write:
\begin{align*}
\frac{P(\beta(X^{\tau}) =c \mid H_1 ) }{P(\beta(X^{\tau}) =c \mid H_0)}
= \frac{\sum_{y \in \X^{\tau}: \beta(y) = c} P(\{y \} \mid H_1) }
{\sum_{y \in \X^\tau;  \beta(y) = c} P(\{y\} \mid H_0) }
= \frac{\sum_{y \in \X^{\tau}:  \beta(y) = c} c P(\{ y \} \mid H_0) }
{\sum_{y \in \X^{\tau}: \beta(y) = c} P(\{ y \} \mid H_0) } = c.
\end{align*}
Rouder noticed that the calibration hypothesis should hold as a
mathematical theorem, without giving an explicit proof; he
demonstrated it by computer simulation in a simple parametric setting. \cite{deng2016continuous}
gave a proof for a somewhat more extended setting yet still with proper priors. In Section~\ref{sec:generalcalibration} we show that a version of the calibration hypothesis continues to hold for general measures based on improper priors, and in Section~\ref{sec:strongcalibration} we extend this further to strong calibration for  group invariance settings as discussed below.

We note that this result, too, relies on the priors
themselves not depending on the stopping time, an assumption which is
violated in several standard default Bayes factor settings. We also
note that, if one thinks of one's priors in a default sense --- they
are practical but not necessarily fully believed --- then the practical
implications of calibration are limited, as shown experimentally by
\cite{heide-2017-why-option}. One would really like a stronger form of calibration in which \eqref{eq:weakcalibration} holds under a whole range of distributions in $H_0$ and $H_1$, rather than in terms of $\PM$ and $\QM$ which average over a prior that perhaps does not reflect one's beliefs fully. For the case that $H_1$ and $H_2$ share a nuisance parameter $g$ taking values in some set $G$, one can define this {\em strong calibration hypothesis\/} as  stating that, for all $c$ with $\beta(x^{\tau}) = c$ for some $x^{\tau} \in \X^{\tau}$, all $g \in G$,
\begin{align}\label{eq:strongcalibration}
c &=
\frac{P(\beta(x^{\tau}) =c \mid H_1, g ) }{P(\beta(x^\tau) =c \mid H_0, g)}.
\end{align}
where $\beta$ is still defined as above; in particular, when
calculating $\beta$ one does not condition on the parameter having the
value $g$, but when assessing its likelihood as in
\eqref{eq:strongcalibration} one does. \cite{heide-2017-why-option}
show that the strong calibration hypothesis certainly does {\em not\/}
hold for general parameters, but they also show by simulations that it
does hold in the practically important case with group invariance and
right Haar priors (Example~\ref{example:A.1} provides an
illustration). In Section~\ref{sec:strongcalibration} we show that in
such cases, one can indeed prove that a version of
\eqref{eq:strongcalibration} holds.

\subsection{Third Sense of Handling Optional Stopping: (Semi-)Frequentist }
In classical, Neyman-Pearson style null hypothesis testing, a main concern is to limit the false positive rate of a
hypothesis test. If this false positive rate is bounded above by some
\(\alpha > 0\), then a null hypothesis significance test (NHST) is said
to have \emph{significance level} \(\alpha\), and if the significance level
is independent of the stopping rule used, we say that the test is
\emph{robust under frequentist optional stopping}.
%PETER: Hier stond T: \Omega -> ... maar dat lijkt me een vreemde definitie
% van een test die op meerdere sample spaces kan stoppen...?
\begin{definition}\label{def:freqrobust}
	A function  \(S: \bigcup_{i=m}^T \X^i \to \{0,1\}\) is said to be a frequentist sequential test with significance level $\alpha$ and minimal sample size $m$ that is
	\emph{robust under optional stopping relative to $H_0$} if for all $P \in H_0$
	\begin{align*}
	P\left(\exists n, m < n \leq T: S(X^n)=1 \right) \leq \alpha,
	\end{align*}
	i.e.\ the probability that there is an $n$ at which
	$S(X^n)=1$ (`the test rejects $H_0$ when given sample $X^n$') is bounded by $\alpha$.
\end{definition}
In our present setting, we can take $m=0$ (larger $m$ become important in Section~\ref{sec:genfreq}), so $n$ runs from $1$ to $T$ and it is easy to show that, for any $0 \leq \alpha \leq 1$, we have
\begin{equation}\label{eq:markov}
\PM \left( \exists n, 0 < n \leq T: \frac{1}{\beta(x^n)}  \leq \alpha \right) \leq \alpha.
\end{equation}
\begin{proof} For any fixed $\alpha$ and any sequence $x^T = x_1,
	\ldots, x_T$, let $\tau(x^T)$ be the smallest $n$ such that, for the
	initial segment $x^n$ of $x^T$, $\beta(x^n) \geq 1/\alpha$ (if no
	such $n$ exists we set $\tau(x^T) = T$). Then $\tau$ is a stopping
	time, $X^{\tau}$ is a random variable, and the probability in
	\eqref{eq:markov} is equal to the $\PM$-probability that
	$\beta(X^{\tau}) \geq 1/\alpha$, which by Markov's inequality is
	bounded by $\alpha$.
\end{proof}
It follows that, if $H_0$ is a singleton, then the sequential test $S$ that rejects
$H_0$ (outputs $S(X^n) = 1$) whenever $\beta(x^n) \geq 1/\alpha$ is a
frequentist sequential test with significance level $\alpha$ that is
robust under optional stopping.

The fact that Bayes factor testing with singleton $H_0$ handles
optional stopping in this frequentist way was noted by
\citet{edwards-1963-bayes-statis} and also emphasized by
\cite{Good1991}, among many others.  If $H_0$ is not a singleton, then
\eqref{eq:markov} still holds, so the Bayes factor still handles
optional stopping in a mixed frequentist (Type I-error) and Bayesian
(marginalizing over prior within $H_0$) sense. From a frequentist
perspective, one may not consider this to be fully satisfactory, and
hence we call it `semi-frequentist'. In some quite special situations
though, it turns out that the Bayes factor satisfies the stronger
property of being truly robust to optional stopping in the above
frequentist sense, i.e.~\eqref{eq:markov} will hold for all $P\in H_0$
and not just `on average'.
This is illustrated in Example~\ref{example:A.1} below and formalized in
Section~\ref{sec:strongfrequentist}.
%
%as argued by \cite{bayarri2016rejection} this semi-frequentist sense of optional
%stopping, when applied to $H_1$ rather than $H_0$, sometimes does
%correspond to what many frequentists consider acceptable in practice.

\section{Discussion: why should one care?}
\label{sec:discussion}
Nowadays, even more so than in the past, statistical tests are often
performed in an on-line setting, in which data keeps coming in
sequentially and one cannot tell in advance at what point the analysis
will be stopped and a decision will be made --- there may indeed be
many such points.  Prime examples include group sequential trials
\citep{proschan2006statistical} and $A/B$-testing, to which all
internet users who visit the sites of the tech giants are subjected.
In such on-line settings, it may or may not be a good idea to use
Bayesian tests. But can and should they be used? Together with the
companion paper \citep{heide-2017-why-option} (DHG from now on), the
present paper sheds some light on this issue. Let us first highlight a central insight from DHG, which
is about the case in which none of the results discussed in the present paper
apply: in many practical situations, many Bayesian statisticians use
priors that are {\em themselves\/} dependent on parts of the data
and/or the sampling plan and stopping time. Examples are Jeffreys
prior with the multinomial model and the Gunel-Dickey default priors
for 2x2 contingency tables advocated by
\cite{jamil-2016-default-gunel}. With such priors, final results
evidently depend on the stopping rule employed, and even though such
methods typically count as `Bayesian', they do not satisfy
$\tau$-independence. The results then become non-interpretable under
optional stopping (i.e.\ stopping using a rule that is not known at the
time the prior is decided upon), and as argued by
\cite{heide-2017-why-option}, the notions of calibration and
frequentist optional stopping even become undefined in such a case.

In such situations, one cannot rely on Bayesian methods to be valid
under optional stopping in any sense at all; in the present paper we
thus focus on the case with priors that are fixed in advance, and that
themselves do not depend on the stopping rule or any other aspects of
the design.
For expository simplicity, we consider the question of whether Bayes factors with such priors are valid under optional stopping in
two extreme settings: in the first setting, the goal of the analysis is purely {\em exploratory\/} --- it should give us some insight in the data and/or suggest novel experiments to gather or novel models to analyze data with. In the second setting we consider the analysis as `final' and the stakes are much higher --- real decisions involving money, health and the like are involved --- a typical example would be a Stage 2 clinical trial, which will decide whether a new medication will be put to market or not.

For the first, {\em exploratory\/} setting, exact error guarantees might
neither be needed at all nor obtainable anyway, so the frequentist
sense of handling optional stopping may not be that important. Yet,
one would still like to use methods that satisfy some basic {\em
	sanity checks\/} for use under optional
stopping. $\tau$-independence is such a check: any method for which it
does not hold is simply not suitable for use in a situation in which
details of the stopping rule may be unknown. Also calibration can be
viewed as such a sanity check: \cite{rouder-2014-option} introduced it
mainly to show that Bayesian posterior odds remain {\em meaningful\/}
under optional stopping: they still satisfy some key property that
they satisfy for fixed sample sizes.

For the second {\em high stakes\/} setting, mere sanity and
interpretability checks are not enough: most researchers would want
more stringent guarantees, for example on Type-I and/or Type-II error
control. At the same time, most researchers would acknowledge that
their priors are far from perfect, chosen to some extent for purposes
of convenience rather than true belief.\footnote{Even De Finetti and
	Savage, fathers of subjective Bayesianism, acknowledged this: see
	Section 5 of DHG.} Such researchers may thus want the
desired Type-I error guarantees to hold for all $P \in H_0$, and not
just in average over the prior as in \eqref{eq:markov}. Similarly, in
the high stakes setting the form of calibration \eqref{eq:weakcalibration}
that can be guaranteed for the Bayes factor would be considered too
weak, and one would hope for a stronger form of calibration as explained
at the end of Section~\ref{sec:simplecalibration}.

DHG show empirically that for some often-used models and priors,
strong calibration can be severely violated under optional
stopping. Similarly, it is possible to show that in general, Type-I
error guarantees based on Bayes factors simply do not hold
simultaneously for all $P \in H_0$ for such models and priors.  Thus,
one should be cautious using Bayesian methods in the high stakes
setting, despite adhortations such as the quote by
\cite{edwards-1963-bayes-statis} in the introduction (or similar
quotes by e.g.~\cite{rouder-2009-bayes}): these existing papers
invariably use $\tau$-independence, calibration or Type-I error
control with simple null hypotheses as a motivation to --- essentially
--- use Bayes factor methods in any situation, including presumably
high-stakes situations and situations with composite null
hypotheses.\footnote{Since the authors of the present papers are
	inclined to think frequentist error guarantees are important, we
	disagree with such claims, as in fact a subset of researchers
	calling themselves Bayesians would as well. To witness, a large
	fraction of recent ISBA (Bayesian) meetings is about frequentist
	properties of Bayesian methods; also the well-known Bayesian authors
	\cite{Good1991} and \cite{edwards-1963-bayes-statis} focus on
	showing that Bayes factor methods achieve a {\em frequentist Type-I
		error\/} guarantee, albeit only for the simple $H_0$ case.}

Still, and this is equally important for practitioners, while
frequentist error control and strong calibration are violated in
general, in some important special cases they do hold, namely if the models
$H_0$ and $H_1$ satisfy a group invariance. We proceed to give an
informal illustration of this fact, deferring the mathematical details
to Section~\ref{sec:strongfrequentist}.
\begin{example}\label{example:A.1}\normalfont
	Consider the one-sample $t$-test as described by
	\cite{rouder-2009-bayes}, going back to
	\cite{jeffreys-1961-theory-of}.
	The test considers normally
	distributed data with unknown standard deviation. The test is meant
	to answer the question whether the data has mean $\mu=0$ (the null
	hypothesis) or some other mean (the alternative hypothesis). Following
	\citep{rouder-2009-bayes}, a Cauchy prior density, denoted by
	$\pi_{\delta}(\delta)$, is placed on the effect size
	$\delta = \mu / \sigma$. The unknown standard deviation is a
	nuisance parameter and is equipped with the improper prior with
	density $\pi_{\sigma}(\sigma) = \frac 1 \sigma$ under both
	hypotheses. This is the so-called {\em right Haar prior\/} for the variance.
	This gives the
	following densities on $n$
	outcomes:
	\begin{align}\label{eq:nulmodel}
	p_{0,\sigma}(x^n)  & =  \frac{1}{(2\pi \sigma^2)^{n/2}} \cdot
	\exp\left(\frac{1}{2\sigma^2} \sum_{i=1}^n x_i^2  \right) \ \ \ [\ = p_{1,\sigma,0}(x^n)\ ]\\
	p_{1,\sigma,\delta}(x^n)  & = \frac{1}{(2\pi \sigma ^2)^{n/2}}
	\cdot
	\exp\left( -\frac{n}{2} \left[ \left(
	\frac{\overline{x}}{\sigma} - \delta \right)^2
	+ \left( \frac{\frac{1}{n}\sum_{i=1}^n (x_i -
		\overline{x})^2}{\sigma^2}
	\right) \right] \right) \text{\ , where } \nonumber \\
	\overline{x} &= \frac{1}{n} \sum_{i=1}^n x_i, \nonumber
	\end{align}
	so that the corresponding Bayesian marginal densities are given by
	\begin{align*}
	\bar{p}_0(x^n)
	&=  \int_0^\infty p_{0,\sigma}(x^n)
	\pi_\sigma(\sigma) \dif \sigma,\\
	\bar{p}_1(x^n)
	&=  \int_0^\infty \int_{-\infty}^\infty p_{1,\sigma,\delta}(x^n)
	\pi_\delta(\delta) \pi_\sigma(\sigma) \dif \delta \dif \sigma =
	\int_0^\infty p_{1,\sigma}(x^n)
	\pi_\sigma(\sigma) \dif \sigma.
	\end{align*}
	Our results in Section~\ref{sec:group} imply that --- under a slight,
	natural restriction on the stopping rules allowed --- the Bayes factor
	$\bar{p}_1(x^n)/\bar{p}_0(x^n)$ is truly robust to optional stopping
	in the above frequentist sense. That is, \eqref{eq:markov} will hold for
	all $P\in H_0$, i.e.\ all $\sigma >0$, and not just `on average'. Thus,
	we can give Type I error guarantees irrespective of the true value of
	$\sigma$. Similarly, strong calibration in the sense of
	Section~\ref{sec:simplecalibration} holds for all $P \in \H_0$.  The
	use of a Cauchy prior is not essential in this construction; the
	result will continue to hold for any proper prior on $\delta$,
	including point priors that put all mass on a single value of
	$\delta$.

	As we show in Section~\ref{sec:group}, these results extend to a
	variety of settings, namely whenever $H_0$ and $H_1$ share a common
	so-called group invariance. In the $t$-test example, it is a scale
	invariance --- effectively this means that for all $\delta$, all $\sigma$, the
	distributions of
	\begin{equation}\label{eq:homegrown}
	\text{ $X_1, \ldots, X_n$ under $p_{1,\sigma,\delta}$, and
		$\sigma X_1, \ldots, \sigma X_n$ under $p_{1,1,\delta}$,
		coincide.}\end{equation}
	For other models, one could have a
	translation invariance; for the full normal family, one has both
	translation and scale invariance; for yet other models, one might have
	a rotation invariance, and so on. Each such invariance is expressed as
	a {\em group\/} --- a set equipped with a binary operation that
	satisfies certain axioms.
	The group corresponding to scale invariance is the set of positive
	reals, and the operator is scalar multiplication or equivalently
	division; similarly, the group corresponding to translation invariance
	is the set of all reals, and the operation is addition.
	
	In the general case, one starts
	with a group $G$ that satisfies certain further restrictions (detailed in Section~\ref{sec:group}), a model $\{ p_{1,g,\theta} : g \in G, \theta \in
	\Theta\}$ where $g$ represents the invariant parameter (vector)
	and the parameterization must be such that the analogue of
	\eqref{eq:homegrown} holds.  In the example above $g = \sigma$ is
	the variance and $\theta$ is set to $\delta \coloneqq \mu/\sigma$.  One then
	singles out a special value of $\theta$, say $\theta_0$, one sets $H_0
	\coloneqq \{ p_{1,g,\theta_0}: g\in G \}$; within $H_1$ one
	puts an arbitrary prior on $\theta$. For every group invariance, there
	exists a corresponding {\em right Haar prior\/} on $G$; one
	equips both models with this prior on
	$G$. Theorem~\ref{thm:strong-calibration-under-os}
	and~\ref{th:frequentist-os-with-nuisance-pars} imply that in all
	models constructed this way, we have strong calibration and Type-I
	error control uniformly for all $g \in G$.  While this is
	hinted at in several papers
	(e.g.~\citep{bayarri2016rejection,dass-2003-unified-condit}) and the
	special case for the Bayesian $t$-test was implicitly proven in
	earlier work by \cite{lai1976confidence}, it seems to never have been
	proven formally in general before.
\end{example}
Our results thus imply that in some situations (group invariance) with
composite null hypotheses, Type-I error control for all $P \in H_0$ under
optional stopping is possible with Bayes factors. What about Type-II
error control and composite null hypotheses that do {\em not\/} satisfy a group
structure? This is partially addressed by the {\em safe testing\/}
approach of \cite{safetesting} (see also \cite{howard2018uniform} for
a related approach). They show that for completely arbitrary $H_0$ and
$H_1$, for any given prior $\pi_1$ on $H_1$, there exists a
corresponding prior $\pi_0$ on $H_0$, the {\em reverse information
	projection prior}, so that, for all $P \in H_0$, one has Type-I
error guarantees under frequentist {\em optional continuation}, a
weakening of the idea of optional stopping. Further, if one wants to
get control of Type-II error guarantees under optional
stopping/continuation, one can do so by first choosing another special
prior $\pi^*_1$ on $H_1$ and picking the corresponding $\pi^*_0$ on
$H_0$. Essentially, like in `default' or `objective' Bayes approaches,
one chooses special priors in lieu of a subjective choice; but the
priors one ends up with are sometimes quite different from the
standard default priors, and, unlike these, allow for frequentist
error control under optional stopping.
\section{The General Case}
\label{sec:general}
Let $(\Omega, \mathcal{F})$ be a measurable space. Fix some $m \geq 0$
and consider a sequence of functions $X_{m+1}, X_{m+2}, \ldots$ on
$\Omega$ so that each $X_n$, $n > m$ takes values in some fixed set
(`outcome space') ${\cal X}$ with associated $\sigma$-algebra
$\Sigma$. When working with proper priors we invariably take $m=0$ and
then we define $X^n \coloneqq (X_{1}, X_{2}, \ldots, X_n)$ and we let
$\Sigma^{(n)}$ be the $n$-fold product algebra of $\Sigma$. When
working with improper priors it turns out to be useful (more explanation further below) to take $m > 0$
and define an {\em initial sample\/} random variable $\initialX$ on
$\Omega$, taking values in some set $\initialspace \subseteq \cX^m$
with associated $\sigma$-algebra $\initialsigma$. In that case we set,
for $n \geq m$, $\samplespacen = \{x^n = (x_1, \ldots, x_n) \in \cX^n:
x^m = (x_1,\ldots, x_m) \in \initialspace\}$, and $X^n \coloneqq (\initialX,
X_{m+1}, X_{m+2}, \ldots, X_n)$ and we let $\Sigma^{(n)}$ be
$\initialsigma \times \prod_{j={m+1}}^n \Sigma$.  In either case, we
let $\F_n$ be the $\sigma$-algebra (relative to $\Omega$) generated by
$(X^n,\Sigma^{(n)})$. Then $(\F_n)_{n=m, m+1, \ldots}$ is a filtration
relative to $\F$ and if we equip $(\Omega,\F)$ with a distribution $P$
then $\initialX, X_{m+1}, X_{m+2}, \ldots$ becomes a random process
adapted to $\F$.  A {\em stopping time\/} is now generalized to be a
function $\tau: \Omega \to \{m+1, m+2, \ldots \} \cup \{\infty\}$ such
that for each $n > m$, the event $\{ \tau = n \}$ is
$\F_n$-measurable; note that we only consider stopping after $m$ initial outcomes.  Again, for a given stopping time $\tau$ and
sequence of data $x^n = (x_1, \ldots, x_n)$, we say that {\em $x^n$ is
	compatible with $\tau$} if it satisfies $X^n = x^n \Rightarrow \tau
= n$, i.e.\ $\{ \omega \in \Omega \mid X^n(\omega) = x^n \} \subset
\{\omega \in \Omega \mid \tau(\omega) = n \}$.

$H_0$ and $H_1$ are now sets of probability distributions on
$(\Omega, \mathcal{F})$. Again one
writes $H_j = \{P_{\theta \mid j} \mid \theta \in \Theta_j\}$ where now
the parameter sets $\Theta_j$ (which, however, could itself be
infinite-dimensional) are themselves equipped with suitable
$\sigma$-algebras.

We will still represent both $H_0$ and $H_1$ by unique measures
\(\PM\) and \(\QM\) respectively, which we now allow to be based on
\eqref{eq:bayesfactor} with improper priors $\pi_0$ and $\pi_1$ that
may be infinite measures. As a result $\PM$ and $\QM$ are positive real measures
that may themselves be infinite. We also allow ${\cal X}$ to be a
general (in particular uncountable) set. Both non-integrability and
uncountability cause complications, but these can be overcome if
suitable Radon-Nikodym derivatives exist. To ensure this,
we will assume that for all $n \geq\max\{ m, 1\}$,
for all $k \in \{0,1\}$ and $\theta \in \Theta_k$,
$P^{(n)}_{\theta|k}$, $\PM^{(n)}$ and $\QM^{(n)}$ are all mutually
absolutely continuous and that
the measures $\QM^{(n)}$ and $\PM^{(n)}$ are
$\sigma$-finite.
%
%  For this, we need to
%  specify an underlying measure $\rho$ on $(\Omega,\F)$ so that for
%  all $n$, $\QM^{(n)} \ll \rho^{(n)}$ and $\PM^{(n)} \ll
%  \rho^{(n)}$. This is not really an assumption, since we can always
%  take $\rho \coloneqq \pi(H_1) \PM + \pi(H_0) \QM$; this would be the
%  measure of the underlying Bayesian marginal distribution on $\Omega$
%  and is thus a natural choice, but
Then there also exists a measure $\rho$ on $(\Omega,\F)$ such that, for all such $n$, $\QM^{(n)}$, $\PM^{(n)}$ and $\rho^{(n)}$ are all mutually absolutely continuous: we can simply take $\rho^{(n)}= \PM^{(n)}$, but
in practice, it is often possible
and convenient to take $\rho$ such that $\rho^{(n)}$ is the Lebesgue
measure on ${\mathbb R}^n$, which is why we explicitly introduce $\rho$ here.
% For the case with proper priors $\pi_0$
% and $\pi_1$, we take $m=0$ and do not need any further assumptions;
% for the case in which one or both priors are improper, we need some mild
% further assumptions to make the resulting posterior odds and Bayes
% factors interpretable.
% That is to say, we assume that
% for the specified $m \geq 1$,

The absolute continuity  conditions guarantee that all required Radon-Nikodym derivatives exist. Finally, we assume that the
posteriors $\pi_k(\Theta_k \mid x^m)$
(as defined in the standard manner in \eqref{eq:posteriortheta}
below; when $m=0$ these are just the priors) are proper probability measures (i.e.\ they integrate to 1) for all $x^m \in \initialspace$.
%s
%and that for all choices of $Q$ in the set
%$\{P_{\theta|0}: \theta \in \Theta_0\} \cup \{P_{\theta|1} : \theta
%\in \Theta_1 \} \cup \PM \cup \QM$, we have, for $n \geq \max \{m,1 \}$:
%\begin{equation}\label{eq:makeitwork}
%  0 < \frac{d Q^{(n)}}{d \rho^{(n)}}  (x^n) < \infty, \quad
%  \rho^{(m)}\text{-a.e. }
%\end{equation}
%Specifically, \eqref{eq:makeitwork} is only violated on a
%$\rho^{(m)}$-null set ${\cal X}^\circ$, which as a consequence of
%the above assumptions has measure zero under all probability
%measures in $H_0$ and $H_1$.
This final requirement is the reason why we sometimes need to
consider $m>0$ and nonstandard sample spaces $\samplespacen$ in the
first place: in practice , one usually starts with the standard
setting of a $(\Omega,\F)$ where $m=0$ and all $X_i$ have the same
status.  In all practical situations with improper priors $\pi_0$
and/or $\pi_1$ that we know of, there is a smallest finite $j$ and a
set ${\cal X}^{\circ} \subset \cX^j$ that has measure $0$ under all
probability distributions in $H_0 \cup H_1$, such that, restricted
to the sample space $\cX^j \setminus {\cal X}^{\circ}$, the measures
$\QM^{(j)}$ and $\PM^{(j)}$ are $\sigma$-finite and mutually
absolutely continuous, and the posteriors $\pi_k(\Theta_k \mid x^j)$
are proper probability measures. One then sets $m$ to equal
this $j$, and sets
$\initialspace \coloneqq \cX^m \setminus {\cal X}^{\circ}$, and the required
properness will be guaranteed.  Our initial sample $\initialX$ is a
variation of what is called (for example, by
\citet{bayarri2012criteria}) a {\em minimal sample}. Yet, the sample size
of a standard minimal sample is itself a random quantity; by
restricting $\cX^m$ to $\initialspace$, we can take its sample size
$m$ to be constant rather than random, which will greatly simplify
the treatment of optional stopping with group invariance; see
Example~\ref{example:A.1} and~\ref{example:B.1} below.

We henceforth refer to the setting now defined (with $m$ and initial
space $\initialspace$ satisfying the requirements above) as the {\em
	general case}.

We need an analogue of \eqref{eq:simplefinal} for this general case.
If $\PM$ and $\QM$ are probability measures, then there is still a
standard definition of conditional probability distributions
$P(H \mid \A)$ in terms of conditional expectation for any given
$\sigma$-algebra $\A$; based on this, we can derive the required
analogue in two steps. First, we consider the case that
$\tau \equiv n$ for some $n > m$. We know in advance that we observe $X^n$ for a fixed
$n$: the appropriate $\A$ is then $\F_n$, $\pi(H \mid \A)(\omega)$ is
determined by $X^n(\omega)$ hence can be written as $\pi(H \mid X^n)$,
and a straightforward calculation gives
that
\begin{equation}\label{eq:fixedn}
\frac{\pi(H_1 \mid X^n = x^n)}{\pi(H_0 \mid X^n = x^n)}
= \left( \left( \frac{d\QM^{(n)}/d
	\rho^{(n)}}{d\PM^{(n)}/d \rho^{(n)}}\right) (x^n) \right)
\cdot \frac{\pi(H_1)}{\pi(H_0)}
\end{equation}
where $(d \QM^{(n)}/d\rho^{(n)})$ and $(d \PM^{(n)}/d\rho^{(n)})$ are
versions of the Radon-Nikodym derivatives defined relative to
$\rho^{(n)}$.
%PETER: niet meer nodig omdat we weer mut abs cont aannemen
%For $x^n$ such that the first factor of the right
%evaluates to $0/0$, we leave the posterior odds undefined; for $x^n$
%such that it is of the form $c/0$ for $c >0$, we set it to $\infty$.
The second step is now to follow exactly the same steps as in the
derivation of \eqref{eq:simplefinal}, replacing $\beta(X^n)$ by
\eqref{eq:fixedn} wherever appropriate (we omit the details). This
yields, for any $n$ such that \mbox{$\rho(\tau = n) > 0$}, and for
$\rho^{(n)}$-almost every  $x^n$ that is compatible with $\tau$,
\begin{equation}  \label{eq:posterior-odds}
\overset{\gamma_n}{\overbrace{\frac{\pi(H_1
			\mid  x^n)}{\pi(H_0 \mid  x^n)}}} =
\frac{\pi(H_1
	\mid X^n = x^n, \tau = n)}{\pi(H_0 \mid X^n = x^n,\tau = n)} =
\overset{\beta_n}{\overbrace{\left( \left(
		\frac{d\QM^{(n)}/d \rho^{(n)}}{d\PM^{(n)}/d \rho^{(n)}}\right) (x^n) \right)}}
\cdot  \frac{\pi(H_1)}{\pi(H_0)},
\end{equation}
%with the same conventions concerning division by $0$ as above.
where here, as below, for $n \geq m$,
we abbreviate $\pi(H_k \mid X^n = x^n)$ to $\pi(H_k \mid x^n)$.
% Note that this Bayes factor is only defined up to a subset of $\Omega
% $ of $\rho$ (and hence $\PM$ and $\QM$)-measure 0.  The derivation
% does not work for $n$ with $\rho(\tau = n) = 0$, but we will not worry
% about this since the probability that we observe such an $x^n$ must be
% 0 under both $\PM$ and $\QM$, so we can include this event into the
% measure 0 subset above.

The above expression for the posterior is valid if $\PM$ and $\QM$ are
probability measures; we will simply take it as the {\em definition\/}
of the Bayes factor for the general case. Again this coincides with standard usage for the improper prior case. In particular, let us  define the conditional posteriors and  Bayes factors given $\initialX = x^m$ in the standard manner, by the formal application of Bayes'  rule, for $k= 0,1$ and measurable $\Theta'_k \subset \Theta_k$ and $\F$-measurable $A$,
\begin{align}\label{eq:posteriortheta}
\pi_k(\Theta'_k \mid x^m) & \coloneqq   \frac{\int_{\Theta'_k} \frac{d P_{\theta|k}^{(m)}}{d \rho^{(m)}}  (x^m)
	d \pi_k(\theta)}{\int_{\Theta_k} \frac{d P_{\theta|k}^{(m)}}{d \rho^{(m)}}  (x^m)  d\pi_k(\theta)} \\ \label{eq:conditionalmarginal}
\KM (A \mid  x^m) & \coloneqq \KM (A \mid \initialX = x^m) \coloneqq
\int_{\Theta_k} P_{\theta|k}(A \mid \initialX = x^m) \dif \pi_k(\theta \mid x^m),
\end{align}
where $P_{\theta|k}(A \mid \initialX=x^m)$ is defined as the value that (a
version of) the conditional probability $P_{\theta|k}(A \mid \F_m)$
takes when $\initialX = x^m$, and is thus defined up to a set of
$\rho^{(m)}$-measure 0.

With these definitions, it is straightforward to derive the following
{\em coherence property}, which automatically holds if the priors are
proper, and which in combination with \eqref{eq:posterior-odds}
expresses that first updating on $x^m$ and then on
$x_{m+1}, \ldots, x_n$ (multiplying posterior odds given $x^m$ with the Bayes factor for $n$ outcomes given $X^m =x^m$, which we denote by $\beta_{n|m}$) has the same result as updating based on the
full $x_1, \ldots, x_n$ at once (i.e.\ multiplying the prior odds with the unconditional Bayes factor $\beta_n$ for $n$ outcomes):
\begin{align}  \label{eq:coherence}
\frac{\pi(H_1
	\mid X^n = x^n, \tau = n)}{\pi(H_0 \mid X^n = x^n,\tau = n)} & =
\overset{\beta_{n|m}}{\overbrace{\left(
		\frac{d\QM^{(n)}(\cdot \mid x^m)}{d\PM^{(n)}(\cdot \mid x^m)} (x^n) \right)}}
\cdot  \frac{\pi(H_1 \mid x^m)}{\pi(H_0 \mid x^m)}.
\end{align}

\subsection{$\tau$-independence, general case}
\label{sec:generalindependence}
The general version of the claim that the posterior odds do not depend
on the specific stopping rule that was used is now immediate, since
the expression \eqref{eq:posterior-odds} for the Bayes factor does not
depend on the stopping time $\tau$.
\subsection{Calibration, general case}
\label{sec:generalcalibration}
We will now show that the calibration hypothesis continues to hold in
our general setting. From here onward, we make the further reasonable
assumption that for every $x^m \in \initialspace$, $\PM(\tau = \infty
\mid x^m) = \QM(\tau = \infty \mid x^m) = 0$ (the stopping time is
almost surely finite), and we define ${\cal T}_{\tau} \coloneqq \{n \in
{\mathbb N}_{>m}\mid \PM(\tau=n) > 0\}$.

To prepare further, let
%for each $n\geq m$, $\G_n \subset \F_n$ be a sub-$\sigma$-algebra of $\F_n$ such t%hat $\G_m \subset \G_{m+1} \subset \ldots$ and let $\G = \bigcup_{n \geq m \G_n}$;
%let
$\{B_{j}\mid j \in {\cal T}_{\tau} \}$ be any
collection of positive random variables such that for each $j \in
{\cal T}_{\tau}$, $B_j$ is $\F_j$-measurable. We can define the {\em
	stopped\/} random variable $B_{\tau}$ as
\begin{align}
\label{eq:def-Bt}
B_{\tau} &\coloneqq \sum_{j=0}^{\infty} \ind[\tau=j]  B_j
= \sum_{j=m+1}^{\infty} \ind[\tau=j] B_j,
\end{align}
where we note that, under this definition, $B_{\tau}$ is well-defined
even if ${\mathbf E}_{\PM}[\tau] = \infty$.
%Consider two collections of distributions, $\cQ_0 \coloneqq \{Q_{0,\lambda}: \lambda \in %\Lambda \}$ and $\cQ_1 \coloneqq Q_{1,\lambda} : \lambda \in \Lambda \}$ on $(\Omega, \F)%$
%such that, for $k=0,1$, for all $x^m \in \initialspace$, a version of the conditio%nal distribution $Q_{k,g}  \mid x^m$ coincides with $\KM \mid x^m$ on the sub-$\si%gma$-algebra $\G_n$. In this section
%we really only need to consider the case with $\G_n = \F_n$ and $\cQ_k = \KM$; the% added generality is to prepare for Section~\ref{sec:group}.
%Since the $Q_{k,\lambda}$ all agree on $\G_n$ and $B_j$ is $\G_n$-measurable, w

We can  define the
induced measures on the positive real line under the null and
alternative hypothesis
for any probability measure $P$ on $(\Omega, \F)$:
% and any $j \in 0, \{m,m+1, \ldots \}$ for which, for all $x^j \in $, the conditio%nal measure $P(\cdot \mid x^m)$ is a well-defined probability measure
\begin{align}\label{eq:induced}
\mugeneral{P}{B}{\tau}
&: \B(\R_{>0}) \to [0,1]: A \mapsto P \left( B_{\tau}^{-1}(A) \right).
\end{align}
%\begin{align}\label{eq:induced}
%\mumarginal{0}{B}{\tau}(\cdot \mid x^m)
%&: \B(\R_{>0}) \to [0,1]: A \mapsto \PM\left( B_{\tau}^{-1}(A) \mid x^m \right), \%nonumber \\
%  \mumarginal{1}{B}{\tau}(\cdot \mid x^m) &: \B(\R_{>0}) \to [0,1]: A \mapsto \QM\%left( B_{\tau}^{-1}(A)\mid x^m \right),
%\end{align}
%In case we are given a version of  the conditional distributions $\bar{P}(\cdot \m%id \initialX)$, and $P = \bar{P}(\cdot \mid x^m)$ for some $x^m \in \initialspace$%,
%we shall write $\mugeneral{P}{B}{\tau}$ as $\mugeneral{\bar{P}$
%
where \(\mathcal{B}(\mathbb{R}_{>0})\) denotes the Borel
\(\sigma\)-algebra of \(\mathbb{R}_{>0}\). Note that, when we refer to
$\mugeneral{P}{B}{n}$, this is identical to $\mugeneral{P}{B}{\tau}$
for the stopping time
$\tau$ which on all of $\Omega$ stops at $n$.
The following lemma is crucial for passing from fixed-sample size to stopping-rule based results.
\begin{lemma}\label{lem:caligali}
	Let
	%$\G_m, \G_{m+1}, \ldots$,
	%PM$, $\QM$,  $\cQ_0$ and $\cQ_1$,
	${\cal T}_{\tau}$ and  $\{B_{n}\mid n \in {\cal T}_{\tau} \}$  be as above.
	Consider two probability measures $P_{0}$ and $P_{1}$ on $(\Omega, \F)$. Suppose that for all $n  \in {\cal T}_{\tau}$, the following {\em fixed-sample size calibration property\/} holds:
	\begin{align}
	\label{eq:ratio-OS-minibatch}
	\text{for some fixed $c > 0$, \ }
	\mugeneral{P_0}{B}{n}
	%\mu_{0,B}^{(n)}(\cdot \mid x^m)
	\text{-almost all\ } b:  \frac{P_1(\tau = n)}{P_0(\tau = n)} \cdot \rnd{\mugencond{P_1}{B}{n}{\tau = n}}{\mugencond{P_0}{B}{n}{\tau = n}} (b) & = c \cdot b.
	%(\cdot \mid x^m)}{\mu_{0,B}^{(n)}(\cdot \mid x^m)}(b) &= b.
	\end{align}
	Then
	%{\em if furthermore the stopping time is adapted to $\G_{m+1}, \G_{m+2}, \ldots$, %i.e.\ for each $n > m$,
	%the event $\{ \tau = n \}$ is $\G_n$-measurable},
	we have
	%for all $x^m \in \initialspace$,
	\begin{align}
	\label{eq:ratio-OS-minibatchb}
	\text{for $\mugeneral{P_0}{B}{\tau}$-almost all \(b\)\ :\ }
	\rnd{\mugeneral{P_1}{B}{\tau}}{\mugeneral{P_0}{B}{\tau}}(b) &= c \cdot b.
	\end{align}
\end{lemma}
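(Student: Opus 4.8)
The plan is to write the stopped pushforward $\mugeneral{P}{B}{\tau}$ as a countable weighted sum of the fixed-$n$ conditional pushforwards and then transport the hypothesis through the sum. First I would disjointify over the value of the stopping time. Since $\tau<\infty$ almost surely under both $P_0$ and $P_1$ (by the standing assumption that the stopping time is almost surely finite) and since $B_\tau=B_n$ on $\{\tau=n\}$ by \eqref{eq:def-Bt}, for every $A\in\B(\R_{>0})$ the event $\{B_\tau\in A\}$ is the disjoint union $\bigcup_{n}\{B_n\in A,\ \tau=n\}$. Taking $P$-measure and factoring each term through conditioning on $\{\tau=n\}$ gives, with the abbreviations $w_n\coloneqq P_0(\tau=n)$, $w'_n\coloneqq P_1(\tau=n)$, $\mu_n\coloneqq\mugencond{P_0}{B}{n}{\tau=n}$ and $\lambda_n\coloneqq\mugencond{P_1}{B}{n}{\tau=n}$, the two decompositions
\begin{align*}
\mugeneral{P_0}{B}{\tau}(A)=\sum_{n\in{\cal T}_\tau} w_n\,\mu_n(A),\qquad \mugeneral{P_1}{B}{\tau}(A)=\sum_{n\in{\cal T}_\tau} w'_n\,\lambda_n(A).
\end{align*}
For the second identity I would use that $w'_n=0$ for $n\notin{\cal T}_\tau$, so that no mass escapes the common index set ${\cal T}_\tau$; this is where the standing mutual absolute continuity of the $\F_n$-restrictions of $P_0$ and $P_1$ enters, since it forces $P_1(\tau=n)>0\iff P_0(\tau=n)>0$.

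Next I would read the fixed-sample-size hypothesis \eqref{eq:ratio-OS-minibatch} as an equality of measures rather than of densities. Because $\mu_n\ll\mugeneral{P_0}{B}{n}$, an ``$\mugeneral{P_0}{B}{n}$-almost all $b$'' statement is in particular a ``$\mu_n$-almost all $b$'' statement, so after multiplying through by $w_n$, \eqref{eq:ratio-OS-minibatch} says exactly that $w'_n\,\dif\lambda_n(b)=c\,b\,w_n\,\dif\mu_n(b)$ as measures on $(\R_{>0},\B(\R_{>0}))$, for each $n\in{\cal T}_\tau$. Summing this identity over $n\in{\cal T}_\tau$ and pulling the common factor $c\,b$ outside the sum, which is legitimate by Tonelli's theorem since every integrand is nonnegative, yields
\begin{align*}
\mugeneral{P_1}{B}{\tau}(A)=\sum_{n\in{\cal T}_\tau} w'_n\,\lambda_n(A)=\int_A c\,b\sum_{n\in{\cal T}_\tau} w_n\,\dif\mu_n(b)=\int_A c\,b\,\dif\mugeneral{P_0}{B}{\tau}(b)
\end{align*}
for every Borel $A$, using the decompositions above. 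Since this holds for all $A$, the function $b\mapsto c\,b$ is a density of $\mugeneral{P_1}{B}{\tau}$ with respect to $\mugeneral{P_0}{B}{\tau}$, which is precisely \eqref{eq:ratio-OS-minibatchb}.

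I expect the genuine difficulty to be measure-theoretic bookkeeping rather than any hard estimate, and the payoff of working at the level of measures (reading off the Radon--Nikodym derivative only at the very end) is that two such difficulties dissolve. First, it sidesteps the awkward task of merging the countably many per-$n$ exceptional null sets into a single $\mugeneral{P_0}{B}{\tau}$-null set: a set that is $\mu_n$-null for one $n$ need not be null for the other conditional measures, so a naive pointwise union argument does not obviously work, whereas the measure identity carries no exceptional set at all. Second, it produces the absolute continuity $\mugeneral{P_1}{B}{\tau}\ll\mugeneral{P_0}{B}{\tau}$ for free, so that the derivative on the left-hand side of \eqref{eq:ratio-OS-minibatchb} is well defined without a separate argument. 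The only points still requiring explicit care are the two already flagged: that $\{\tau=n\}$ carries positive $P_1$-mass exactly when it carries positive $P_0$-mass, so that $\tau$-mass neither leaks to indices outside ${\cal T}_\tau$ nor to $\{\tau=\infty\}$, and the interchange of summation and integration; both follow from the standing assumptions together with nonnegativity of all quantities involved.
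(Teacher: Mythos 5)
Your proof is correct and follows essentially the same route as the paper's: both decompose the stopped pushforward as the countable mixture $\sum_{n} P_k(\tau=n)\,\mugencond{P_k}{B}{n}{\tau=n}$, apply the fixed-$n$ hypothesis term by term, and resum via Tonelli/monotone convergence. Your phrasing at the level of measures (rather than a single chain of integral identities over an arbitrary Borel set $A$) is a purely presentational difference, though it does make explicit the per-$n$ null-set and absolute-continuity bookkeeping that the paper leaves implicit.
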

The proof is in Section~\ref{app:proofs} in the supplementary material.

%In this subsection we apply this lemma to the case $\F_n = \G_n$, so
%that the condition on the stopping time is subsumed by our earlier
%conditions; the case $\F_n \neq \G_n$ will become useful in Section~\ref{sec:group}.
In this subsection we apply this lemma to  the measures $\KM(\cdot \mid x^m)$ for arbitrary fixed $x^m \in \initialspace$, with their induced measures  $\mumargcond{0}{\gamma}{\tau}{x^m},
\mumargcond{1}{\gamma}{\tau}{x^m}$ for  the {\em stopped posterior odds\/} $\gamma_{\tau}$. Formally, the posterior odds $\gamma_n$ as
defined in \eqref{eq:posterior-odds} constitute a random variable for each
$n$, and, under our mutual absolute continuity assumption for $\PM$
and $\QM$, $\gamma_n$ can be directly written as \(\rnd{\Qn}{\Pn}
\cdot \pi(H_1) / \pi(H_0)\). Since, by definition, the measures $\KM(\cdot \mid x^m)$ are probability measures, the Radon-Nikodym derivatives in \eqref{eq:ratio-OS-minibatch} and \eqref{eq:ratio-OS-minibatchb} are well-defined.
%  $B_{\tau}$
%instantiated to
%%
% .  Thus, for arbitrary $\tau$, the {\em
%  stopped posterior odds\/} \(\gamma_{\tau}: \Omega \to \R_{>0}\),
%which can be written as (\ref{eq:def-Bt}) with $B_n = \gamma_n$,
% Thus, for arbitrary $\tau$, the {\em stopped Bayes factor\/}
%\(\betat: \Omega \to \R_{>0}\), which can be written as (\ref{eq:def-Bt}) with $B_j = \bet%a_j$
%\begin{align}
%  \label{eq:def-betat}
%\betat &\coloneqq \sum_{j=0}^{\infty} \ind[\tau=j]  \rnd{\Qj}{\Pj}
%\end{align}
%are  well-defined, and we can consider their induced measures
%$\mumargcond{k}{\gamma}{\tau}{x^m}$.
\begin{lemma}\label{lem:calibali}
	We have for all $x^m \in \initialspace$, all $n > m$:
	\begin{align}
	\label{eq:ratio-OS-mini}
	\text{for \( \mumargcond{0}{\gamma}{n}{x^m}\)-almost all \(b  \)\ :\ }  \frac{\mumarginal{1}{\gamma}{n}(\tau = n \mid x^m)}{\mumarginal{0}{\gamma}{n}(\tau = n \mid x^m)} \cdot
	\rnd{
		\mumargcond{1}{\gamma}{n}{x^m}}{
		\mumargcond{0}{\gamma}{n}{x^m}} (b) &= \frac{\pi(H_0 \mid x^m)}{\pi(H_1 \mid x^m)} \cdot b.
	\end{align}
\end{lemma}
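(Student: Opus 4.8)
The plan is to read the claim as exactly the \emph{fixed-sample-size calibration property} \eqref{eq:ratio-OS-minibatch} of Lemma~\ref{lem:caligali}, instantiated with the conditional probability measures $P_0 \coloneqq \PM(\cdot \mid x^m)$ and $P_1 \coloneqq \QM(\cdot \mid x^m)$ (which are genuine probability measures by the properness assumption on the posteriors), the random variable $B_n \coloneqq \gamma_n$, and the constant $c \coloneqq \pi(H_0 \mid x^m)/\pi(H_1 \mid x^m)$. The whole argument then rests on one pointwise identity. Combining \eqref{eq:posterior-odds}, which identifies $\gamma_n$ with the posterior odds given $X^n = x^n, \tau = n$, with the coherence property \eqref{eq:coherence}, I get $\gamma_n = \beta_{n|m}\cdot \pi(H_1\mid x^m)/\pi(H_0\mid x^m)$, hence $\beta_{n|m} = c\,\gamma_n$. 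Since $\beta_{n|m}$ is by definition a version of $\rnd{\QM^{(n)}(\cdot\mid x^m)}{\PM^{(n)}(\cdot\mid x^m)}$, this says that on $\F_n$ the likelihood ratio $\rnd{P_1}{P_0}$ equals $c\,\gamma_n$. This is the measure-theoretic counterpart of the elementary fact $P(\{y\}\mid H_1) = \beta(y)\,P(\{y\}\mid H_0)$ that drove the finite calibration computation in Section~\ref{sec:simplecalibration}.

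With this identity I would compute the relevant induced measures directly. For a Borel set $A \subseteq \R_{>0}$, both $\{\gamma_n \in A\}$ and $\{\tau = n\}$ lie in $\F_n$ --- the first because $\gamma_n$ is a function of $X^n$, the second because $\tau$ is a stopping time. Writing $\mu_k^{(n)}(A)\coloneqq P_k(\gamma_n \in A,\ \tau=n)$ for the push-forward of $P_k$ restricted to $\{\tau=n\}$ under $\gamma_n$, absolute continuity on $\F_n$ lets me integrate the density over this event and then change variables:
\[
\mu_1^{(n)}(A) = \int_{\{\gamma_n \in A,\,\tau = n\}} \rnd{P_1}{P_0}\,\dif P_0
= \int_{\{\gamma_n \in A,\,\tau = n\}} c\,\gamma_n\,\dif P_0
= \int_A c\,b\,\dif \mu_0^{(n)}(b),
\]
the last step holding because the integrand $c\,\gamma_n$ is a function of $\gamma_n$ alone. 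Hence $\rnd{\mu_1^{(n)}}{\mu_0^{(n)}}(b) = c\,b$ for $\mu_0^{(n)}$-almost every $b$; this is precisely where the constant $b$ replaces $\gamma_n$ on the level set $\{\gamma_n = b\}$, mirroring how $c$ was pulled out of the sum over $\{y:\beta(y)=c\}$ in the simple case.

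It remains to restore the conditioning on $\{\tau=n\}$ that appears in \eqref{eq:ratio-OS-minibatch}. For $n$ with $P_0(\tau=n\mid x^m)>0$ (otherwise the claim is vacuous), the induced measures conditioned on $\tau=n$ are the normalizations $\mu_k^{(n)}/\mu_k^{(n)}(\R_{>0}) = \mu_k^{(n)}/P_k(\tau=n\mid x^m)$, so their Radon--Nikodym derivative equals $\frac{P_0(\tau=n\mid x^m)}{P_1(\tau=n\mid x^m)}\cdot c\,b$; multiplying by the prefactor $P_1(\tau=n\mid x^m)/P_0(\tau=n\mid x^m)$ cancels the normalizing ratio and leaves $c\,b = \frac{\pi(H_0\mid x^m)}{\pi(H_1\mid x^m)}\,b$, as claimed. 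I expect the real work to be measure-theoretic bookkeeping rather than conceptual: justifying that $\beta_{n|m} = c\,\gamma_n$ holds $P_0$-almost everywhere on $\F_n$ (where the mutual-absolute-continuity and $\sigma$-finiteness assumptions and the coherence identity are genuinely used), and making the push-forward change of variables and all the ``almost every $b$'' qualifications precise, since individual level sets $\{\gamma_n=b\}$ may have $P_0$-measure zero. Once the identity $\rnd{P_1}{P_0} = c\,\gamma_n$ is in place the rest is the calibration collapse, and this lemma then feeds directly into Lemma~\ref{lem:caligali} to yield stopped-odds calibration.
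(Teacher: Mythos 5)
Your proof is correct and follows essentially the same route as the paper's: both arguments rest on identifying the likelihood ratio of the two measures on $\F_n$ with $\gamma_n$ times the reciprocal prior odds and then pushing forward under $\gamma_n$ so that the density becomes a constant times $b$. The only organizational difference is that you obtain the unconditional identity $\rnd{P_1}{P_0}=c\,\gamma_n$ on $\F_n$ from the coherence property \eqref{eq:coherence} and restrict and renormalize on $\{\tau=n\}$ at the end, whereas the paper conditions on $\{\tau=n\}$ from the outset and invokes \eqref{eq:simplegivenn} at its step $(*)$; both are valid and yield the same cancellation of the $\bar{P}_k(\tau=n\mid x^m)$ factors.
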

Combining the two lemmas now immediately gives \eqref{eq:ratio-OS} below, and combining further with \eqref{eq:coherence} and \eqref{eq:posterior-odds} gives \eqref{eq:ratio-OSb}:
\begin{corollary}
	\label{cor:calibration-under-OS}
	In the setting considered above, we have for all $x^m \in \initialspace$:
	\begin{align}
	\label{eq:ratio-OS}
	\text{for $\mumargcond{0}{\gamma}{\tau}{x^m}$-almost all $b$\ :\ }
	\frac{\pi(H_1 \mid x^m)}{\pi(H_0 \mid x^m)} \cdot \rnd{\mumargcond{1}{\gamma}{\tau}{x^m}}{\mumargcond{0}{\gamma}{\tau}{x^m}}(b) &=
	b,
	\end{align}
	and also
	\begin{align}
	\label{eq:ratio-OSb}
	\text{for $\mumargcond{0}{\gamma}{\tau}{x^m}$-almost all $b$\ :\ }
	\frac{\pi(H_1)}{\pi(H_0)} \cdot \rnd{\mumarginal{1}{\gamma}{\tau}}{\mumarginal{0}{\gamma}{\tau}}(b) &=
	b,
	\end{align}
	In words, the posterior odds remain calibrated under any stopping rule
	$\tau$ which stops almost surely at times $m < \tau < \infty$.
\end{corollary}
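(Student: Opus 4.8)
The plan is to follow the two-step route indicated just before the statement: first fix an initial sample $x^m \in \initialspace$ and establish the conditional identity \eqref{eq:ratio-OS}, and then average over $x^m$ to obtain the unconditional identity \eqref{eq:ratio-OSb}. For the conditional statement I would apply Lemma~\ref{lem:caligali} with $P_0 \coloneqq \PM(\cdot \mid x^m)$, $P_1 \coloneqq \QM(\cdot \mid x^m)$, with the $\F_j$-measurable, positive family $B_j \coloneqq \gamma_j$, and with the (for fixed $x^m$) constant $c \coloneqq \pi(H_0 \mid x^m)/\pi(H_1 \mid x^m)$, which is strictly positive and finite because the mutual absolute continuity of $\PM^{(m)}$ and $\QM^{(m)}$ forces both conditional hypothesis probabilities into $(0,1)$; the standing assumption that $\tau$ is almost surely finite guarantees that the induced measures are genuine probability measures. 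The hypothesis \eqref{eq:ratio-OS-minibatch} required by Lemma~\ref{lem:caligali} is then precisely Lemma~\ref{lem:calibali}, once one rewrites the stopping probabilities $P_1(\tau=n)/P_0(\tau=n)$ and the induced-measure derivative in the notation of \eqref{eq:ratio-OS-mini}. Its conclusion \eqref{eq:ratio-OS-minibatchb} then reads $\rnd{\mumargcond{1}{\gamma}{\tau}{x^m}}{\mumargcond{0}{\gamma}{\tau}{x^m}}(b) = c\,b$ for $\mumargcond{0}{\gamma}{\tau}{x^m}$-almost all $b$, which is exactly \eqref{eq:ratio-OS} after dividing through by $c$. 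This step is little more than bookkeeping, since all the analytic content already sits inside the two lemmas.

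To remove the conditioning I would disintegrate over the initial sample. Writing $\mumarginal{k}{\gamma}{\tau}(A) = \int \mumargcond{k}{\gamma}{\tau}{x^m}(A)\, d\bar{P}_k^{[X^m]}(x^m)$ for $k \in \{0,1\}$, I would view $(X^m, \gamma_\tau)$ as a pair with a joint law under each hypothesis and compute its Radon--Nikodym derivative as the product of the conditional derivative in $b$, supplied by \eqref{eq:ratio-OS}, and the marginal derivative $\rnd{\bar{P}_1^{[X^m]}}{\bar{P}_0^{[X^m]}}(x^m)$ in $x^m$. The algebraic input is that the conditional and unconditional prior odds differ exactly by the likelihood ratio of the initial sample: combining \eqref{eq:coherence} with \eqref{eq:posterior-odds} shows that $\frac{\pi(H_1 \mid x^m)/\pi(H_0 \mid x^m)}{\pi(H_1)/\pi(H_0)}$ equals $\rnd{\bar{P}_1^{[X^m]}}{\bar{P}_0^{[X^m]}}(x^m)$. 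Substituting $c = \pi(H_0 \mid x^m)/\pi(H_1 \mid x^m)$ into the product, the two occurrences of this initial-sample likelihood ratio cancel, so the joint derivative collapses to $(\pi(H_0)/\pi(H_1))\,b$, a function of $b$ alone. Since the joint derivative does not depend on $x^m$, marginalizing out $x^m$ leaves $\rnd{\mumarginal{1}{\gamma}{\tau}}{\mumarginal{0}{\gamma}{\tau}}(b) = (\pi(H_0)/\pi(H_1))\,b$, which rearranges to \eqref{eq:ratio-OSb}.

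I expect this second step to be the main obstacle. One must justify the disintegration and the Fubini interchange under the standing $\sigma$-finiteness and mutual-absolute-continuity assumptions, and then manage the two nested layers of ``almost all'' qualifiers --- almost every $b$ for each fixed $x^m$ coming from \eqref{eq:ratio-OS}, and almost every $x^m$ for the outer average --- verifying that the union of the exceptional null sets remains null under $\mumarginal{0}{\gamma}{\tau}$. The genuine crux is the cancellation of the $x^m$-dependence, which relies entirely on the coherence identity \eqref{eq:coherence}; were that identity to fail, the per-$x^m$ calibration statements would not patch together into a clean marginal one. By comparison, the first step --- matching Lemma~\ref{lem:calibali} to the hypothesis of Lemma~\ref{lem:caligali} --- is routine.
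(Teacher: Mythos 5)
Your proposal is correct and follows essentially the same route as the paper: the paper likewise obtains \eqref{eq:ratio-OS} by feeding Lemma~\ref{lem:calibali} (as the fixed-$n$ hypothesis \eqref{eq:ratio-OS-minibatch}, with $P_k = \KM(\cdot\mid x^m)$, $B_n = \gamma_n$ and $c = \pi(H_0\mid x^m)/\pi(H_1\mid x^m)$) into Lemma~\ref{lem:caligali}, and then derives \eqref{eq:ratio-OSb} by combining with \eqref{eq:coherence} and \eqref{eq:posterior-odds}, which is exactly the cancellation of the initial-sample likelihood ratio $\rnd{\QM^{(m)}}{\PM^{(m)}}(x^m)$ that your disintegration step makes explicit. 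The only difference is that you spell out the measure-theoretic bookkeeping (disintegration over $x^m$ and the nested null sets) that the paper leaves implicit in its one-line justification.
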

For discrete and strictly positive measures with prior odds
$\pi(H_1)/\pi(H_0) = 1$, we always have $m=0$, and \eqref{eq:ratio-OS}
is equivalent to \eqref{eq:weakcalibration}. Note that
$\mumargcond{0}{\gamma}{\tau}{x^m}$-almost everywhere in
\eqref{eq:ratio-OS} is equivalent to
$\mumargcond{1}{\gamma}{\tau}{x^m}$-almost everywhere
because the two measures are
assumed to be mutually absolutely continuous.

\subsection{(Semi-)Frequentist Optional Stopping}
\label{sec:genfreq}
In this section we consider our general setting as in the beginning of
Section~\ref{sec:generalcalibration}, i.e.\ with the added assumption
that the stopping time is a.s.\ finite, and with ${\cal T}_{\tau} \coloneqq
\{j \in {\mathbb N}_{>m}\mid \PM(\tau=j) > 0\}$.

Consider any
initial sample $x^m \in \initialspace$ and let $\PM \mid x^m$ and $\QM
\mid x^m$ be the conditional Bayes marginal distributions as defined
in \eqref{eq:conditionalmarginal}.
We first note that, by Markov's inequality, for any nonnegative random variable $Z$ on $\Omega$ with, for all $x^m \in \initialspace$, ${\mathbf E}_{\PM \mid x^m}[Z] \leq 1$, we must have, for $0 \leq \alpha \leq 1$, $\PM(Z^{-1} \leq \alpha \mid x^m) \leq
{\bf E}_{\PM \mid x^m}[ Z]/\alpha^{-1} \leq \alpha$.
\begin{proposition}\label{prop:safetest}
	Let $\tau$ be any stopping rule satisfying our requirements. Let
	$\beta_{\tau|m}$ be the stopped Bayes factor given $x^m$, i.e., in
	accordance with \eqref{eq:def-Bt},
	$\beta_{\tau|m} = \sum_{j=m+1}^{\infty} \ind[\tau=j] \beta_{j|m}$
	with $\beta_{j|m}$ as given by \eqref{eq:coherence}. Then
	$\beta_{\tau|m}$ satisfies, for all $x^m \in \initialspace$,
	${\bf E}_{\PM \mid x^m}[ \beta_{\tau|m}] \leq 1$, so that, by the
	reasoning above,
	$\PM(\frac1{\beta_{\tau|m}} \leq \alpha \mid x^m) \leq \alpha$.
\end{proposition}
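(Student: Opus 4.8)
The plan is to reduce the statement to the single inequality $\mathbf{E}_{\PM \mid x^m}[\beta_{\tau|m}] \leq 1$: once this is established, the conclusion $\PM\!\left(1/\beta_{\tau|m} \leq \alpha \mid x^m\right) \leq \alpha$ is immediate from the Markov-inequality reasoning stated just before the proposition, applied with $Z = \beta_{\tau|m}$. So I would fix $x^m \in \initialspace$ and abbreviate $P := \PM(\cdot \mid x^m)$ and $Q := \QM(\cdot \mid x^m)$; by the properness assumptions on the posteriors these are genuine probability measures on $(\Omega,\F)$, and they inherit the mutual absolute continuity and $\sigma$-finiteness of $\PM^{(n)}, \QM^{(n)}$ on each $\F_n$. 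The strategy is to recognize the conditional Bayes factor $\beta_{n|m}$, viewed as a process in $n$, as a likelihood-ratio martingale under $P$, and then to apply the optional stopping theorem.

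Concretely, by the coherence property \eqref{eq:coherence}, the random variable $M_n := \beta_{n|m}$ is (a version of) the Radon--Nikodym derivative $d Q^{(n)}/d P^{(n)}$ of the restrictions of $Q$ and $P$ to $\F_n$. I would then verify the martingale property by the standard computation: for any $A \in \F_k$ with $m < k \leq n$, since $A \in \F_k \subseteq \F_n$ and the restrictions are consistent,
\begin{align*}
\int_A M_n \dif P = Q(A) = \int_A M_k \dif P,
\end{align*}
so that $\mathbf{E}_P[M_n \mid \F_k] = M_k$. Thus $(M_n)_{n > m}$ is a nonnegative $(\F_n)$-martingale under $P$ with $\mathbf{E}_P[M_n] = Q(\Omega) = 1$ for every $n$.

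It remains to pass from fixed $n$ to the stopped variable $\beta_{\tau|m} = M_{\tau}$. Here $\tau$ is finite $P$-almost surely by the standing assumption $\PM(\tau = \infty \mid x^m) = 0$. I would apply optional stopping to the bounded stopping times $\tau \wedge n$, which gives $\mathbf{E}_P[M_{\tau \wedge n}] = 1$ for each $n$; since $M_{\tau \wedge n} \to M_{\tau}$ pointwise $P$-almost surely as $n \to \infty$ (on $\{\tau < \infty\}$ the truncation is eventually inactive), Fatou's lemma yields $\mathbf{E}_P[M_{\tau}] \leq \liminf_n \mathbf{E}_P[M_{\tau \wedge n}] = 1$, which is exactly $\mathbf{E}_{\PM \mid x^m}[\beta_{\tau|m}] \leq 1$. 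The main obstacle is precisely this unboundedness of $\tau$: without integrability of $\tau$ one cannot invoke the equality form of the optional stopping theorem, and indeed $\mathbf{E}_P[M_{\tau}]$ can in general be strictly smaller than $1$. Nonnegativity of the martingale together with Fatou's lemma salvages the inequality, and the inequality is all that the Markov step requires. A secondary point worth checking carefully is that the properness and absolute-continuity hypotheses genuinely descend to the conditional measures $P$ and $Q$, so that $M_n$ really is the likelihood ratio of two \emph{probability} measures and $\mathbf{E}_P[M_n] = 1$ holds exactly rather than merely as an inequality.
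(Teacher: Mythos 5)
Your proof is correct, but it takes a genuinely different route from the paper's. The paper derives ${\bf E}_{\PM \mid x^m}[\beta_{\tau|m}] = 1$ (with equality) as a direct consequence of its calibration machinery: it first computes ${\bf E}_{\PM \mid x^m}[\gamma_{\tau}] = \pi(H_1 \mid x^m)/\pi(H_0 \mid x^m)$ by integrating $b$ against $\PM^{[\gamma_{\tau}]}(\cdot \mid x^m)$ and substituting the Radon--Nikodym identity of Corollary~\ref{cor:calibration-under-OS}, and then converts $\gamma_{\tau}$ to $\beta_{\tau|m}$ via the coherence property \eqref{eq:coherence}. You instead identify $(\beta_{n|m})_{n>m}$ as the likelihood-ratio martingale $dQ^{(n)}/dP^{(n)}$ under $P = \PM(\cdot \mid x^m)$ and invoke optional stopping for bounded times plus Fatou. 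Both arguments are sound; the identification of $\beta_{n|m}$ with the Radon--Nikodym derivative of the conditional marginals is exactly what \eqref{eq:coherence} provides, and the consistency of restrictions needed for the martingale property holds because $P^{(n)}$ and $Q^{(n)}$ are marginals of fixed measures on $(\Omega,\F)$. What each approach buys: the paper's route reuses Lemma~\ref{lem:caligali} (already proved for the calibration results) and delivers the exact equality ${\bf E}_{\PM\mid x^m}[\beta_{\tau|m}]=1$, but it needs the standing assumption that $\tau$ is almost surely finite under \emph{both} $\PM(\cdot\mid x^m)$ and $\QM(\cdot\mid x^m)$; your martingale route is more self-contained, needs finiteness of $\tau$ only under $\PM(\cdot\mid x^m)$, and correctly isolates why only the inequality is guaranteed in general (mass of $\QM$ can escape if $\tau$ is not $\QM$-a.s.\ finite) --- which is all the Markov step requires. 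Your argument is essentially the ``nonnegative test martingale'' perspective the paper mentions in connection with \cite{shafer2011test} and \cite{safetesting}, so it also generalizes more readily beyond Bayes-marginal numerators.
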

\begin{proof}
	We have
	\begin{multline}
	{\bf E}_{\PM \mid x^m}\left[ \gamma_{\tau} \right] \
	= \int b  \PM^{[\gamma_{\tau}]} (\dif b \mid x^m)
	= \nonumber \\
	\int   \frac{\dif \QM^{[\gamma_{\tau}]}(b \mid x^m)}{\dif
		\PM^{[\gamma_{\tau}]} (b \mid x^m)} \cdot \frac{\pi(H_1 \mid
		x^m)}{\pi(H_0 \mid x^m)} \;  \PM^{[\gamma_{\tau}]} (\dif b \mid x^m)
	= \frac{\pi(H_1 \mid x^m)}{\pi(H_0 \mid x^m)}, \nonumber
	\end{multline}
	where the first equality follows by definition of expectation, the
	second follows from Corollary~\ref{cor:calibration-under-OS}, and the
	third follows from the fact that the integral equals $1$.
	
	But now note that
	$$\beta_{\tau|m} = \sum_{j=m+1}^{\infty} \ind[\tau=j] \beta_{j|m}
	= \sum_{j=m+1}^{\infty} \ind[\tau=j] \gamma_{j} \cdot \frac{\pi(H_0 \mid x^m)}{\pi(H_1 \mid x^m)} = \gamma_{\tau} \cdot \frac{\pi(H_0 \mid x^m)}{\pi(H_1 \mid x^m)},
	$$
	where the second equality follows from \eqref{eq:coherence} together with the first equality in \eqref{eq:posterior-odds}.
	Combining the two equations we get:
	$$
	{\bf E}_{\PM \mid x^m}\left[ \beta_{\tau|m}\right] =  {\bf E}_{\PM \mid x^m}\left[ \gamma_{\tau} \cdot \frac{\pi(H_0 \mid x^m)}{\pi(H_1 \mid x^m)}  \right] = 1.
	$$
\end{proof}
The desired result now follows by plugging in a particular
stopping rule: let \(S: \bigcup_{i=m+1}^{\infty} \X^i \to \{0,1\}\) be the frequentist sequential test  defined by setting, for all $n> m$, $x^n \in \samplespacen$:  $S(x^n) = 1$ if and only if $\beta_{n|m} \geq 1/\alpha$.
\begin{corollary}\label{cor:thailand}
	Let $t^* \in \{m+1, m+2, \ldots \} \cup \{ \infty \}$ be the smallest $t^* > m$ for which $\beta_{t|m}^{-1} \leq \alpha$. Then for  arbitrarily large $T$,
	when applied to the stopping rule $\tau \coloneqq \min \{T, t^*\}$, we find that
	$$\PM(\exists n, m < n \leq T: S(X^n) = 1 \mid x^m)
	=\PM(\exists n, m < n \leq T: \beta^{-1}_{n|m} \leq \alpha \mid x^m)
	\leq \alpha.$$
\end{corollary}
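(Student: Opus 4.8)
The plan is to reduce the statement to Proposition~\ref{prop:safetest} by recognizing the \emph{ever-reject} event as a single-time event for the specific stopping rule $\tau = \min\{T, t^*\}$. The first equality in the displayed claim is immediate from the definition of the test $S$: by construction $S(x^n) = 1$ if and only if $\beta_{n|m} \geq 1/\alpha$, i.e.\ $\beta_{n|m}^{-1} \leq \alpha$, so the two probabilities in the statement are literally equal. It therefore remains to bound $\PM(\exists n, m < n \leq T: \beta_{n|m}^{-1} \leq \alpha \mid x^m)$.

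First I would check that $\tau = \min\{T, t^*\}$ is a legitimate stopping rule of the kind Proposition~\ref{prop:safetest} requires. Since $\tau \leq T < \infty$, it stops at a time $m < \tau < \infty$ everywhere, and since $t^*$ is the first index at which the $\F_n$-measurable quantity $\beta_{n|m}^{-1}$ crosses below $\alpha$, each event $\{\tau = n\}$ is $\F_n$-measurable. Thus $\tau$ satisfies the requirements, and Proposition~\ref{prop:safetest} applies directly, giving $\PM(\beta_{\tau|m}^{-1} \leq \alpha \mid x^m) \leq \alpha$.

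The heart of the argument is the event identity $\{\exists n, m < n \leq T: \beta_{n|m}^{-1} \leq \alpha\} = \{\beta_{\tau|m}^{-1} \leq \alpha\}$, which I would establish using that, by \eqref{eq:def-Bt}, $\beta_{\tau|m} = \beta_{n|m}$ on the event $\{\tau = n\}$. For the inclusion $\subseteq$: if some rejection occurs by time $T$ then $t^* \leq T$, so $\tau = t^*$ and $\beta_{\tau|m}^{-1} = \beta_{t^*|m}^{-1} \leq \alpha$ by definition of $t^*$. For $\supseteq$ I would argue the boundary case by contraposition: if no rejection occurs by $T$, then $t^* > T$, hence $\tau = T$, and $\beta_{\tau|m}^{-1} = \beta_{T|m}^{-1} > \alpha$ precisely because $T$ is one of the indices at which the threshold was not yet crossed, so the right-hand event fails too. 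Combining this identity with the bound from Proposition~\ref{prop:safetest} yields the claim.

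The only real obstacle is the careful bookkeeping in this event identity --- in particular ensuring that stopping at the deadline $T$ (the case $t^* > T$) does \emph{not} spuriously count as a rejection, which is exactly what the strict inequality $\beta_{T|m}^{-1} > \alpha$ in that case guarantees.
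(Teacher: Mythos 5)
Your proposal is correct and follows essentially the same route as the paper: the paper proves the corollary precisely by applying Proposition~\ref{prop:safetest} to the stopping rule $\tau = \min\{T, t^*\}$ and identifying the ever-reject event with $\{\beta_{\tau|m}^{-1} \leq \alpha\}$ (the paper leaves this event identity implicit, mirroring the argument it gave for \eqref{eq:markov} in the simple case, whereas you spell out both inclusions and the boundary case $t^* > T$ explicitly). No gaps.
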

The corollary implies that the test $S$ is robust under optional stopping in the frequentist sense relative to $H_0$  (Definition~\ref{def:freqrobust}). Note that, just  as in the simple case, the setting is really just `semi-frequentist' whenever $H_0$ is not a singleton.
\section{Optional stopping with group invariance}
\label{sec:group}
Whenever the null hypothesis is composite, the previous results only
hold under the marginal distribution $\PM$ or, in the case of improper
priors, under $\PM (\cdot \mid X^m = x^m)$. When a group structure can
be imposed on the outcome space and (a subset of the) parameters that
is joint to $H_0$ and $H_1$, stronger results can be derived for
calibration and frequentist optional stopping. Invariably, such
parameters function as {\em nuisance parameters\/} and our results are
obtained if we equip them with the so-called {\em right Haar
	prior\/} which is usually improper. Below we show how
we then obtain results that simultaneously hold for {\em all\/} values of
the nuisance parameters. Such cases include many standard testing
scenarios such as the (Bayesian variations of the) $t$-test, as
illustrated in the examples below.  Note though that our results do
not apply to settings with improper priors for which no group
structure exists. For example, if $P_{\theta |0}$ expresses that
$X_1, X_2, \ldots$ are i.i.d.\ Poisson$(\theta)$, then from an
objective Bayes or MDL point of view it makes sense to adopt Jeffreys'
prior for the Poisson model; this prior is improper, allows initial
sample size $m=1$, but does not allow for a group structure. For such
a prior we can only use the marginal results
Corollary~\ref{cor:calibration-under-OS} and
Corollary~\ref{cor:thailand}. Group theoretic preliminaries, such as definitions of a (topological) group, the right Haar measure, et cetera can be found in Section~\ref{app:groupprel} of the supplementary material.
\subsection{Background for fixed sample sizes}
\label{sec:general-background}
Here we prepare for our  results by  providing some
general background on invariant priors for Bayes factors with fixed
sample size $n$ on models with nuisance parameters that admit a group
structure, introducing the right Haar measure, the corresponding Bayes marginals, and (maximal) invariants. We use these results in
Section~\ref{sec:strongfixed} to derive Lemma~\ref{lem:strongfixedn}, which  gives us
%We establish some basic lemmas for the Bayes factor
%$\beta_n$ based on marginal measures equiped with right Haar
%priors. In particular,
%in the simplest case in which the nuisance
%parameter is the only parameter in $H_0$, (a) it will turn out that
%the distribution of $\beta_n$ is the same under any distribution in
%$H_0$ (Lemma~\ref{lemma:betanphi} and remarks underneath); (b) we can
%establish a quotient $\sigma$-algebra $\G_n$ such that
%a random variable is measurable with respect to $\samplespacen,\G_{n}$ if
%and only if it has the same distribution under all $P \in H_0$;
% $(c)
a strong version of calibration for fixed
$n$. The setting is extended to variable stopping times in
Section~\ref{sec:extending}, and then Lemma~\ref{lem:strongfixedn} is
used in this extended setting to obtain our strong optional stopping
results in Section~\ref{sec:strongcalibration}
and~\ref{sec:strongfrequentist}.

For now, we assume a sample space $\samplespacen$ that is locally compact and
Hausdorff, and that is a subset of some product space $\cX^n$ where $\cX$ is
itself locally compact and Hausdorff.  This requirement is met, for
example, when $\mathcal{X} = \mathbb{R}$ and $\samplespacen = \cX^n$. In practice,
the space $\samplespacen$ is invariably a subset of $\cX^n$ where
some null-set is removed for technical reasons that will become apparent below.
We associate $\samplespacen$ with its Borel
$\sigma$-algebra which we denote as $\F_n$. Observations are denoted
by the random vector $X^n = (X_1, \ldots, X_n) \in \samplespacen$.  We thus
consider outcomes of fixed sample size, denoting these as
$x^n \in \samplespacen$, returning to the case with stopping times in
Section~\ref{sec:strongcalibration} and~\ref{sec:strongfrequentist}.

From now on we let $G$ be a locally compact group $G$ that acts topologically and
properly\footnote{A group acts properly on a set $Y$ if the mapping
	$\psi: Y \times G \mapsto Y \times Y$ defined by
	$\psi(y, g) = (y \cdot g, y)$ is a proper mapping, i.e.\ the inverse
	image of $\psi$ of each compact set in $Y \times Y$ is a compact set
	in $Y \times G$. (\citet{eaton-1989-group-invar}, Definition~5.1)}
on the right of $\samplespacen$.
As hinted to before, this proper action requirement sometimes forces
the removal from $\cX^n$ of some trivial set with measure zero under
all hypotheses involved. This is demonstrated at the end of Example
\ref{example:A.1} below.

Let  $\genericP{0}$ and $\genericP{1}$ (notation to become clear below) be two arbitrary
probability distributions on $\samplespacen$ that are mutually absolutely continuous.
We will now generate hypothesis classes $H_0$ and $H_1$, both sets of
distributions on $\samplespacen$ with parameter space $G$, starting
from $\genericP{0}$ and $\genericP{1}$, where $e \in G$ is the group
identity element.
The group action
of $G$ on $\samplespacen$ induces a group action on these measures
defined by
\begin{align}\label{eq:definitiePkg}
P_{k,g}(A) \coloneqq (\genericP{k} \cdot g)(A)
\coloneqq \genericP{k}(A \cdot g^{-1})
= \int \ind[A](x \cdot g) \, \genericP{k}(\dif x)
\end{align}
for any set $A \in \F_n$, $k=0, 1$. When applied to $A = \samplespacen$, we get $P_{k,g}(A) = 1$, for all $g \in G$, whence
we have created two sets of probability measures parameterized by $g$, i.e.,
\begin{align}\label{eq:standardform}
H_0 \coloneqq \{ P_{0,g} \mid g \in G \}
\ \ ;\ \
H_1 \coloneqq \{ P_{1,g} \mid g \in G \}.
\end{align}
In this context, $g \in G$, can typically be viewed as nuisance
parameter, i.e.\ a parameter that is not directly of interest, but
needs to be accounted for in the analysis. This is illustrated in Example~\ref{example:A.1} and Example~\ref{example:B.1} below. The examples also illustrate how to extend this setting  to cases where
there are more parameters than just $g \in G$ in either $H_0$ or $H_1$. We  extend the whole
setup to our general setting with non-fixed $n$ in
Section~\ref{sec:strongcalibration}.

%
% we will put the right Haar measure as a prior. Therefore, we view the
%group $G$ together with the $\sigma$-algebra $\mathcal{G}$, generated
%by the Borel sets of $G$, as a measurable space.
%where  $\nu$ be the right Haar measure on $(G, \mathcal{G})$.
%
%
We use the right Haar measure for $G$ as a prior to define the Bayes marginals:
\begin{align}\label{eq:marginalsmetg}
\KM(A) = \int_G \int_{\samplespacen} \ind[A] \dif P_{k,g} \, \nu(\dif g)
\end{align}
for $k= 0,1$ and $A \in \F_n$.
%Like before, denote with
%$P_{0, g}^{(j)}, P_{1,g}^{(j)}$ the restricted measures of
%$P_{0,g}, P_{1,g}$ to $(\mathcal X^j, \mathcal{F}_j)$ and
Typically, the right Haar measure is improper so that the
Bayes marginals $\KM$ are not integrable.
Yet, in all cases of interest, they are (a) still $\sigma$-finite,
and, (b), $\PM$, $\QM$ and all distributions $P_{k,g}$ with
$k=0,1$ and $g \in G$ are mutually absolutely continuous; we
will henceforth assume that (a) and (b) are the case.

\paragraph{Example~\ref{example:A.1} (continued)}\label{example:A.1a}
Consider the $t$-test of Example~\ref{example:A.1}. For consistency with
the earlier Example~\ref{example:A.1}, we abbreviate for general
measures $P$ on $\samplespacen$, $(\dif P /\dif \lambda)$ (the density
of distribution $P$ relative to Lebesgue measure on ${\mathbb R}^n$)
to $p$.  Normally, the one-sample $t$-test is viewed as a test between
$H_0 = \{P_{0,\sigma} \mid \sigma \in {\mathbb R}_{> 0} \}$ and
$H'_1 = \{ P_{1,\sigma,\delta} \mid \sigma \in {\mathbb R}_{> 0},
\delta \in {\mathbb R} \}$, but we can obviously also view it as test
between $H_0$ and $H_1 = \{P_{1,\sigma} \}$ by integrating out the
parameter $\delta$ to obtain
\begin{align}
\label{eq:example-integrating-out}
p_{1,\sigma}(x^n)  & = \int p_{1,\sigma,\delta}(x^n)   \pi_\delta(\delta)  \dif \delta.
\end{align}
The nuisance parameter $\sigma$ can be identified with the group of scale
transformations \mbox{$G = \{ c \mid c \in \mathbb{R}_{>0} \}$}. We
thus let the sample space be $\samplespacen = \mathbb{R}^n \setminus
\{0\}^n$, i.e., we remove the measure-zero set $\{0\}^n$, such that
the group action is proper on the sample space. The group action is
defined by $x^n \cdot c = c \, x^n$ for $x^n \in \samplespacen, c \in
G$.  Take $e = 1$ and let, for $k=0,1$, $P_{k,e}$ be the distribution
with density $p_{k,1}$ as defined in \eqref{eq:nulmodel} and
\eqref{eq:example-integrating-out}. The measures $P_{0,g}$ and
$P_{1,g}$ defined by \eqref{eq:definitiePkg} then turn out to have the
densities $p_{0,\sigma}$ and $p_{1,\sigma}$ as defined above, with
$\sigma$ replaced by $g$. Thus, $H_0$ and $H_1$ as defined by
\eqref{eq:nulmodel} and \eqref{eq:example-integrating-out} are indeed
in the form \eqref{eq:standardform} needed to state our results.\\ \ \\
%\begin{align*}
%  \frac{\dif P_{0, g}}{\dif \lambda}(x^n)
%  &= \exp\left(\frac{1}{2\pi g^2} \sum_{i=1}^n x_i^2  \right) \\
%  \frac{\dif P_{1,g}}{\dif \lambda}(x^n)
%  &= \frac{1}{(2\pi g^2)^{n/2}} \int_{-\infty}^\infty \exp\left( -\frac{n}{2} \left[ \left%( \frac{\overline{x}}{g} - \delta \right)^2 + \left( \frac{\frac{1}{n}\sum_{i=1}^n (x_i - %\overline{x})^2}{g^2} \right) \right] \right) \pi_\delta(\delta) \dif \delta,\\
%\end{align*}
%Note that the effect size in $P_{1,g}$ is $\delta = \mu / g$,
%which is again equipped with a Cauchy prior $\pi(\delta)$.
In most standard invariant settings, $H_0$ and $H_1$ share the same vector of nuisance parameters, and one can reduce $H_0$ and $H_1$ to
\eqref{eq:standardform} in the same way as above, by integrating out all other
parameters; in the example above, the only non-nuisance parameter was $\delta$.
The scenario of Example~\ref{example:A.1} can be generalized to a surprisingly wide variety of statistical models. In practice we often start with a model $H_1 = \{ P_{1,\gamma,\delta} : \gamma \in \Gamma, \theta \in \Theta\}$ that implicitly already contains a group structure, and we single out a special subset $\{P_1,\gamma,\theta_0: \gamma \in \Gamma \}$; this is what we informally described in Example~\ref{example:A.1}. More generally,  we can start with potentially
large (or even nonparametric) hypotheses
\begin{align}
\label{eq:there}
{H'_k = \{ P_{\theta'|k} : \theta' \in \Theta'_k\}}
\end{align}
which at first are not related to any group invariance, but which we
want to equip with an additional nuisance parameter determined by a
group $G$ acting on the data. We can turn this into an instance of the
present setting by first choosing,for $k=0,1$, a proper prior density
$\pi_k$ on $ \Theta'_k$, and defining $P_{k,e}$ to equal the
corresponding Bayes marginal, i.e.
\begin{equation}\label{eq:almost}
P_{k,e}(A) \coloneqq \int P_{\theta' \mid k}(A) \, \dif \pi_k(\theta').
\end{equation}
We can then generate $H_k= {\{P_{k,g} \mid g \in G\}}$ as in
\eqref{eq:definitiePkg} and \eqref{eq:standardform}. In the example
above, $H'_1$ would be the set of all Gaussians with a single fixed
variance $\sigma_0^2$ and $\Theta'_1 = {\mathbb R}$ would be the set
of all effect sizes $\delta$, and the group $G$ would be scale
transformation; but there are many other possibilities. To give but a
few examples, \cite{dass-2003-unified-condit} consider testing the
Weibull vs. the log-normal model, the exponential vs. the log-normal,
correlations in multivariate Gaussians, and
\cite{berger-1998-bayes-factors} consider location-scale families and
linear models where $H_0$ and $H_1$ differ in their error
distribution. Importantly, the group $G$ acting on the data induces
groups $G_k$, $k = 0,1$, acting on the parameter spaces, which depend
on the parameterization. In our example, the $G_k$ were equal to $G$,
but, for example, if $H_0$ is Weibull and $H_1$ is log-normal, both
given in their standard parameterizations, we get
$G_0 = \{ g_{0,b,c} \mid g_{0,b,c}(\beta, \gamma) = (b \beta^c, \gamma
/ c), b > 0, c > 0 \}$ and
$G_1 = \{ g_{1, b,c} \mid g_{1, b,c}(\mu, \sigma) = (c\mu+ \log (b), c
\sigma), b > 0, c > 0 \}$. Several more examples are given by
\cite{dass-1998-unified-bayes}.

On the other hand, clearly not all hypothesis sets can be generated
using the above approach. For instance, the hypothesis $H'_1 =
\{P_{\mu, \sigma} \mid \mu=1, \sigma > 0 \}$ with $P_{\mu, \sigma}$ a
Gaussian measure with mean $\mu$ and standard deviation $\sigma$
cannot be represented as in \eqref{eq:standardform}. This is due to
the fact that for $\sigma, \sigma' >0, \sigma\neq \sigma'$, no element
$g \in \R_{>0}$ exists such that for any measurable set $A \subseteq
\samplespacen$ the equality
\begin{align*}
P_{1, \sigma}(A) = P_{1, \sigma'}(A \cdot g^{-1})
\end{align*}
holds. This prevents an equivalent construction of $H'_1$ in the form
of \eqref{eq:standardform}.

We now turn to the main ingredient that will be needed to obtain results on optional stopping: the quotient $\sigma$-algebra.
\begin{definition}[\citet{eaton-1989-group-invar}, Chapter~2]\label{def:orbits, transitive group actions}
	A group $G$ acting on the right of a set $Y$ induces an
	equivalence relation: $y_1 \sim y_2$ if and only if there
	exists $g \in G$ such that $y_1 = y_2 \cdot g$. This equivalence
	relation partitions the space in \emph{orbits}:
	$O_y = \{ y \cdot g \mid g \in G \}$, the collection of which is
	called the \emph{quotient space} $Y / G$.
	There exists a map, the \emph{natural projection}, from $Y$ to the
	quotient space which is defined by
	$\varphi_{Y}: Y \to Y/G: y \mapsto \{y \cdot g \mid g \in G\}$, and which we use to define the {\em quotient $\sigma$-algebra}
	% A group $G$ acts \emph{transitively} on $Y$ if and only if it has
	% exactly one orbit.
	\begin{align}
	\label{eq:quotient-sigma-algebra}
	\G_n =\{\varphi_{\samplespacen}^{-1}(\varphi_{\samplespacen}(A)) \mid A \in \F_n\}.
	\end{align}
\end{definition}
% (when $\mathcal{B}$ is the Borel $\sigma$-algebra, $\mathcal{B}_G$ is the $\sigma$-algebra of $G$-invariant Borel sets on $G$).
\begin{definition}[\citet{eaton-1989-group-invar}, Chapter~2]
	A random element $U_n$ on $\samplespacen$ is \emph{invariant} if for all
	$g \in G$, $x^n \in \samplespacen$, $U_n(x^n) = U_n(x^n \cdot g)$. The
	random element $U_n$ is \emph{maximal invariant} if $U_n$ is invariant and
	for all $y^n \in \samplespacen$, $U_n(x^n) = U_n(y^n)$ implies
	$x^n = y^n \cdot g$ for some $g \in G$.
	\label{def:invariant}
\end{definition}
Thus, $U_n$ is maximal invariant if and only if $U_n$ is constant on
each orbit, and takes different values on different orbits; $\varphi_{\samplespacen}$
is thus an example of a maximal invariant. Note that any maximal
invariant is $\G_n$-measurable. The importance of this quotient
$\sigma$-algebra $\G_n$ is the following evident fact:
\begin{proposition}
	For fixed $k\in \{0,1 \}$, every invariant $U_n$ has the same
	distribution under all ${P_{k,g}, g \in G}$.
\end{proposition}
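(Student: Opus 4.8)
The plan is to show directly that the distribution of $U_n$ under $P_{k,g}$ does not depend on $g$, by computing the pushforward measure and then collapsing the $g$-dependence using invariance. Fix $k \in \{0,1\}$ and let $B$ be an arbitrary measurable subset of the range of $U_n$; it suffices to show that $P_{k,g}(U_n^{-1}(B))$ takes the same value for every $g \in G$, since these probabilities determine the law of $U_n$.

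The first step is to apply the defining integral representation of $P_{k,g}$ from \eqref{eq:definitiePkg} with the set $A = U_n^{-1}(B)$, which yields
\begin{align*}
P_{k,g}(U_n^{-1}(B)) = \int \ind[U_n^{-1}(B)](x \cdot g) \, \genericP{k}(\dif x).
\end{align*}
The second step is to invoke Definition~\ref{def:invariant}: because $U_n$ is invariant, $U_n(x \cdot g) = U_n(x)$ for all $x \in \samplespacen$ and all $g \in G$, so the integrand $\ind[U_n^{-1}(B)](x \cdot g)$ equals $1$ precisely when $U_n(x) \in B$, i.e.\ it coincides with $\ind[U_n^{-1}(B)](x)$. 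Substituting this removes the $g$-dependence and gives
\begin{align*}
P_{k,g}(U_n^{-1}(B)) = \int \ind[U_n^{-1}(B)](x) \, \genericP{k}(\dif x) = \genericP{k}(U_n^{-1}(B)).
\end{align*}
Since the right-hand side is $P_{k,e}(U_n^{-1}(B))$ and contains no $g$, and $B$ was arbitrary, the distribution of $U_n$ under $P_{k,g}$ equals its distribution under $P_{k,e}$ for every $g \in G$, which is the claim.

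As the authors signal by calling this an ``evident fact,'' there is no genuine obstacle here: the whole argument is a one-line change of variables justified entirely by the invariance identity $U_n(x \cdot g) = U_n(x)$. The only points requiring (routine) care are measurability---that $U_n^{-1}(B) \in \F_n$, so that $P_{k,g}(U_n^{-1}(B))$ is well defined and the indicator manipulations are legitimate---which holds because $U_n$ is a random element, together with the validity of the integral representation in \eqref{eq:definitiePkg}, which is simply the definition of $P_{k,g}$.
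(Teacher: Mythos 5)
Your proof is correct and is precisely the one-line change-of-variables argument the paper leaves implicit when it calls the proposition an ``evident fact'': push forward $P_{k,e}$ by the group action via \eqref{eq:definitiePkg} and use $U_n(x\cdot g)=U_n(x)$ to erase the $g$-dependence. Nothing further is needed.
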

% In the case that the group action is transitive, there is only one
% orbit, a maximal invariant takes only one value, and every invariant
% is a maximal invariant.
Chapter~2 of \citep{eaton-1989-group-invar} provides several methods
and examples how to construct a concrete maximal invariant, including
the first two given below. Since $\beta_n$ is invariant under the
group action of $G$ (see below), $\beta_n$ is an example of an
invariant, although not necessarily of a maximal invariant.

\paragraph{Example~\ref{example:A.1} (continued)}\label{example:A.2}
Consider the setting of the one-sample $t$-test as described above in
Example~\ref{example:A.1}. A maximal invariant for
$x^n \in \samplespacen$ is
\[U_n(x^n) = (x_1 / |x_1|, x_2 / |x_1|, \ldots, x_n / |x_1|).\]

\begin{example}\label{example:B.1}\normalfont
	A second example, with a group invariance structure on two
	parameters, is the setting of the two-sample $t$-test with the
	right Haar prior (which coincides here with Jeffreys' prior)
	\mbox{$\pi(\mu, \sigma) = 1 / \sigma$} (see
	\citet{rouder-2009-bayes} for details): the group is
	\mbox{$G = \{ (a, b) \mid a >0, b \in \mathbb{R} \}$}. Let the sample
	space be \mbox{$\samplespacen = \mathbb{R}^n \setminus \text{span}(e_n)$},
	where $e_n$ denotes a vector of ones of length $n$ (this is to
	exclude the measure-zero line for which the $s(x^n)$ is zero), and
	define the group action by $x^n \cdot (a, b) = ax^n + be_n$ for
	$x^n \in \samplespacen$. Then (\citet{eaton-1989-group-invar}, Example~2.15) a maximal invariant for $x^n \in \samplespacen$ is
	$U_n(x^n) = (x^n - \overline{x}e_n) / s(x^n)$, where $\overline{x}$
	is the sample mean and
	$s(x^n) = \left( \sum_{i=1}^n (x_i - \overline{x})^2 \right)^{1/2}$.
	
	However, we can also construct a maximal invariant similar to the  one in Example~\ref{example:A.1}, which gives  a special status to an initial sample:
	\begin{align*}
	U_n\left(X^n\right) = \left(
	\frac{X_2 - X_1}{|X_2 - X_1|},
	\frac{X_3 - X_1}{|X_2 - X_1|},
	\ldots,
	\frac{X_n - X_1}{ |X_2 - X_1|}\right), \,\, \,\, \, n \geq 2.
	\end{align*}
	%This is a maximal invariant, since for $ 2 \leq i \leq n, \, (a, b) \in G$ we h%ave
	%\begin{align*}
	%f(X_i) &= \frac{X_i - X_1}{|X_2-X_1|} = \frac{a}{a} \left( \frac{X_i - X_1 + b %- b}{|X_2 - X_1 +b - b|} \right)\\
	%&= \frac{(aX_i + b) - (aX_1 + b)}{|(aX_2 + b) - (aX_1 + b)|}  = f\left(X_i \cdo%t (a,b)\right).
	%\end{align*}
\end{example}
\subsection{Relatively Invariant Measures and Calibration for Fixed $n$}\label{sec:strongfixed}
Let $U_n$ be a maximal invariant, taking values in the measurable
space $({\cal U}_n, \G_n)$. Although we have given more
concrete examples above, it follows from the results of
\cite{andersson-1982-distr-maxim} that, in case we do not know how to
construct a $U_n$, we can always take $U_n = \varphi_{\samplespacen}$,
the natural projection.  Since we
assume mutual absolute continuity, the Radon-Nikodym derivative
$\frac{\dif \mug{1,g}{U}{n}} {\dif \mug{0,g}{U}{n}}$ must exist and we
can apply the following theorem (note it is here that the use of {\em
	right\/} Haar measure is crucial; a different result holds for the left Haar measure):\footnote{This theorem requires that
	there exists some relatively invariant measure $\mu$ on $\samplespacen$
	such that for $k = 0,1, g \in G$, the $P_{k,g}$ all have a density
	relative to $\mu$. Since the Bayes marginal $\PM$ based on the right
	Haar prior is easily seen to be such a relatively invariant
	measure, the conditions for the theorem apply.}
\paragraph{Theorem \cite[Theorem 2.1]{berger1998bayes}}
Under our previous definitions of and
assumptions on $G$, $P_{k,g}$, $\bar{P}_{k}$ let $\beta(x^n) \coloneqq \bar{P}_1(x^n)/\bar{P}_0(x^n)$ be the Bayes factor based on $x^n$.
Let $U_n$ be a maximal invariant as above, with (adopting the notation of
\eqref{eq:induced}) marginal measures $\mug{k,g}{U}{n}$, for $k =0,1$
and $ g \in G$. There exists a version of the Radon-Nikodym derivative such that we  have for all $g \in G$, all $x^n \in \samplespacen$,
\begin{align}
\label{eq:quotient-likelihood-ration}
\frac{\dif \mug{1,g}{U}{n}}
{\dif \mug{0,g}{U}{n}} \left( U_n(x^n) \right)
= \beta(x^n).
\end{align}
%Analogously to , we define, for $k = 0,1$, $g \in G$, under each $P_{k,g}$, its in%duced measure on the Bayes factor $\beta_{n}$, as:
%\begin{align}\label{eq:inducedc}
%\mug{0,g}{\beta}{n}
%%^{(n)}_{(k,g),\beta}
%&: \B(\R_{>0}) \to [0,1]: A \mapsto P_{k,g}\left( \beta_{n}^{-1}(A)  \right).
%\end{align}
As a first consequence of the theorem above, we note (as did \citet{berger1998bayes}) that the Bayes
factor $\beta_n \coloneqq \beta(X^N)$ is $\G_n$-measurable (it is constant on orbits) , and
thus {\em it has the same distribution under $P_{0,g}$ and $P_{1,g}$
	for all $g \in G$}. The theorem also implies the following crucial lemma:
\begin{lemma}{\bf [Strong Calibration for Fixed $n$]}\label{lem:strongfixedn}
	Under the assumptions of the theorem above, let $U_n$ be a maximal
	invariant and let $V_n$ be a $\G_n$-measurable binary random
	variable with ${P_{0,g}(V_n = 1) > 0}$, ${P_{1,g}(V_n = 1) > 0}$.
	Adopting the notation of \eqref{eq:induced}, we can choose the
	Radon-Nikodym derivative ${\dif \mugcond{1,g}{\beta}{n}{V_n = 1}}/
	{\dif \mugcond{0,g}{\beta}{n}{V_n= 1}}$ so that we have, for all
	${x^n \in \samplespacen}$:
	\begin{align}\label{eq:thesame}
	\frac{P_{1,g}(V_n = 1)}{P_{0,g}(V_n = 1)} \cdot
	\frac{\dif \mugcond{1,g}{\beta}{n}{V_n = 1}}
	{\dif \mugcond{0,g}{\beta}{n}{V_n= 1}}(\beta_n(x^n)) = \beta_n(x^n),
	\end{align}
	where for the special case with $P_{k,g}(V_n  = 1) = 1$, we get
	$\frac{\dif \mug{1,g}{\beta}{n}}
	{\dif \mug{0,g}{\beta}{n}}(\beta_n(x^n)) = \beta_n(x^n)$.
\end{lemma}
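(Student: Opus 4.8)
The plan is to reduce the statement to the single fact, contained in the theorem of \cite{berger1998bayes} quoted above, that the Bayes factor is \emph{exactly} the likelihood ratio of the maximal invariant, and then to exploit the elementary observation that any likelihood-ratio statistic is automatically self-calibrating. Both $\beta_n$ and $V_n$ are $\G_n$-measurable --- $\beta_n$ because it is constant on orbits, $V_n$ by assumption --- so the whole argument can be carried out after restricting the relevant measures to the invariant $\sigma$-algebra $\G_n$, on which, as I explain next, $\beta_n$ plays the role of a Radon--Nikodym derivative.

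First I would recast \eqref{eq:quotient-likelihood-ration} as a statement about restricted measures. Writing $P_{k,g}|_{\G_n}$ for the restriction of $P_{k,g}$ to $\G_n$ and using that a maximal invariant generates $\G_n$ (so that, up to null sets, $\G_n$-sets are inverse images under $U_n$), the pushforward identity \eqref{eq:quotient-likelihood-ration} translates into
\begin{align}
\label{eq:plan-restricted}
P_{1,g}(B) = \int_B \beta_n \dif P_{0,g} \quad \text{for all } B \in \G_n ,
\end{align}
i.e.\ $\beta_n$ is a version of $\rnd{P_{1,g}|_{\G_n}}{P_{0,g}|_{\G_n}}$, and since $P_{k,g}$ restricted to $\G_n$ is independent of $g$ (the Proposition above), this version does not depend on $g$. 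Because the conditioning event $\{V_n=1\}$ lies in $\G_n$, \eqref{eq:plan-restricted} restricted to this event shows that on $\{V_n=1\}$ the density of $P_{1,g}(\cdot\mid V_n=1)$ with respect to $P_{0,g}(\cdot\mid V_n=1)$ is $\tfrac{P_{0,g}(V_n=1)}{P_{1,g}(V_n=1)}\beta_n$. Finally I would push forward under the $\G_n$-measurable map $\beta_n$: for any Borel $A\subseteq\R_{>0}$, using that $\{\beta_n\in A\}\cap\{V_n=1\}\in\G_n$ and then \eqref{eq:plan-restricted},
\begin{align}
\label{eq:plan-pf}
\mugcond{1,g}{\beta}{n}{V_n=1}(A) = \frac{1}{P_{1,g}(V_n=1)} \int_{\{\beta_n\in A\}\cap\{V_n=1\}} \beta_n \dif P_{0,g} = \frac{P_{0,g}(V_n=1)}{P_{1,g}(V_n=1)} \int_A b \, \mugcond{0,g}{\beta}{n}{V_n=1}(\dif b),
\end{align}
where the last step is the self-calibration computation (the integrand $\beta_n$ becomes the dummy value $b$ of the variable it pushes forward to). Reading off the derivative from \eqref{eq:plan-pf} gives $\rnd{\mugcond{1,g}{\beta}{n}{V_n=1}}{\mugcond{0,g}{\beta}{n}{V_n=1}}(b) = \tfrac{P_{0,g}(V_n=1)}{P_{1,g}(V_n=1)}\,b$; choosing precisely this everywhere-defined version and evaluating at $b=\beta_n(x^n)$ yields \eqref{eq:thesame} for every $x^n$, with the case $P_{k,g}(V_n=1)=1$ reducing to the displayed special case.

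I expect the main obstacle to be the first step: justifying the passage from the orbit-space pushforward identity \eqref{eq:quotient-likelihood-ration} to the restricted-measure identity \eqref{eq:plan-restricted}, which hinges on identifying $\G_n$ with the $\sigma$-algebra generated by the maximal invariant $U_n$ (equivalently, on every $\G_n$-measurable variable, in particular $V_n$, factoring measurably through $U_n$). This relies on the regularity available in our locally compact Hausdorff setting and on \cite{eaton-1989-group-invar,andersson-1982-distr-maxim}; one may always take $U_n=\varphi_{\samplespacen}$, for which $\sigma(U_n)=\G_n$ by definition. It is exactly here that the hypothesis that $V_n$ is $\G_n$-measurable is indispensable: because $\{V_n=1\}$ is then a union of orbits, conditioning on it commutes with the pushforward and leaves \eqref{eq:plan-restricted} intact, introducing no dependence on $g$. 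Since $V_n$ is moreover invariant, $P_{k,g}(V_n=1)$ is itself independent of $g$, which is what makes the calibration identity hold simultaneously for \emph{all} $g\in G$ --- the ``strong'' content of the lemma. The remaining manipulations are the routine conditioning and change-of-variables steps indicated above.
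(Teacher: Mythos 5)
Your proposal is correct and follows essentially the same route as the paper's own proof: both rest on Berger's Theorem 2.1 (equation \eqref{eq:quotient-likelihood-ration}), pass the conditioning on the $\G_n$-measurable event $\{V_n=1\}$ through to pick up the factor $P_{0,g}(V_n=1)/P_{1,g}(V_n=1)$, and finish with a change of variables to the pushforward under $\beta_n$. The only difference is presentational --- you phrase Berger's identity as $\beta_n$ being the Radon--Nikodym derivative of the measures restricted to $\G_n$, while the paper works with the pushforwards under $U_n$ on $\mathcal{U}_n$; these are equivalent given the identification $\sigma(U_n)=\G_n$, which both arguments use implicitly and which you correctly flag as the one point needing care.
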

\subsection{Extending to Our General Setting with Non-Fixed Sample Sizes}
\label{sec:extending}
We start with the same setting as above: a group $G$ on sample space
$\samplespacen \subset \cX^n$ that acts topologically and properly on
the right of $\samplespacen$; two distributions $P_{0,e}$ and
$P_{1,e}$ on $(\samplespacen,\F_n)$ that are used to generate $H_0$
and $H_1$, and Bayes marginal measures based on the right Haar measure
$\PM$ and $\QM$, which are both $\sigma$-finite. We now denote $H_k$
as $H_k^{(n)}$, $P_{k,e}$ as $P_{k,e}^{(n)}$ and $\KM$ as $\KM^{(n)}$,
all $P \in H_0^{(n)} \cup H_1^{(n)}$ are mutually absolutely
continuous.

We now extend this setting to our general random process setting as specified in the beginning of
Section~\ref{sec:generalcalibration} by further assuming that, for the same group
$G$, for some $m > 0$, the above setting is defined for each $n \geq
m$. To connect the $H_k^{(n)}$ for all these $n$, we further assume that
there exists a subset $\initialspace \subset \cX^m$ that
has measure $1$ under $P^{(n)}_{k,e}$  (and hence under all
$P^{(n)}_{g,e}$) such that for all $n \geq m$:
\begin{enumerate}
	\item We can write $\samplespacen = \{x^n \in
	\cX^n: (x_1, \ldots, x_m) \in \langle \cX^m \rangle \}$.
	\item For all $x^n \in \samplespacen$, the  posterior $\nu \mid x^n$ based on the right Haar measure $\nu$ is proper.
	\item The probability measures $P^{(n)}_{k,e}$ and $P^{(n+1)}_{k,e}$ satisfy Kolmogorov's compatibility condition for a random process.
	\item The group action $\cdot$ on the measures $P^{(n)}_{k,e}$ and
	$P^{(n+1)}_{k,e}$ is compatible, i.e.\ for every $n >0$, for every $A
	\in \F_n$, every $g \in G$, $k \in \{0,1\}$, we have
	$P^{(n+1)}_{k,g}(A) = P^{(n)}_{k,g}(A)$.
\end{enumerate}
Requirement 4.\ simply imposes the condition that the group action considered is
the same for all $n \in {\mathbb N}$. As a consequence of 3.\ and 4.,
the probability measures $P^{(n)}_{k,g}$ and $P^{(n+1)}_{k,g}$ satisfy
Kolmogorov's compatibility condition for all $g \in G$,
$k \in \{0,1\}$ which means that there exists a probability measure
$P_{k,g}$ on $(\Omega, \F)$ (under which
$\initialX, X_{m+1}, X_{m+2}, \ldots$ is a random process), defined as
in the beginning of Section~\ref{sec:general}, whose marginals for
$n \geq m$ coincide with $P^{(n)}_{k,g}$, and there exist measures
$\PM$ and $\QM$ on $(\Omega, \F)$ whose marginals for $n \geq m$
coincide with $\PM^{(n)}$ and $\QM^{(n)}$.  We have thus defined a set
$H_0$ and $H_1$ of hypotheses on $(\Omega, \F)$ and the corresponding
Bayes marginals $\PM$ and $\QM$ and are back in our general
setting. It is easily verified that the 1- and 2-sample Bayesian
$t$-tests both satisfy all these assumptions: in
Example~\ref{example:A.1}, take $m=1$ and
$\initialspace = {\mathbb R} \setminus \{0\}$; in
Example~\ref{example:A.2}, take $m=2$ and
$\initialspace = {\mathbb R}^2 \setminus \{(a,a): a \in {\mathbb R
}\}$. The conditions can also be verified for the variety of examples
considered by \citet{berger1998bayes} and \cite{bayarri2012criteria}.
In fact, our initial sample $x^m \in \initialspace$ is a variation of
what they call a {\em minimal sample\/}; by excluding `singular'
outcomes from $\cX^m$ to ensure that the group acts properly on
$\initialspace$, we can guarantee that the initial sample is of fixed
size. The size of the minimal sample can be larger, on a set of
measure 0 under all $P \in H_0 \cup H_1$, e.g.\ if, in
Example~\ref{example:A.2}, $X_1 = X_2$.  We chose to ensure a fixed
size $m$ since it makes the extension to random processes considerably
easier.

In Section \ref{sec:general-background}, underneath
Example~\ref{example:A.1} we already outlined how a composite
alternative hypothesis can be reduced to a hypothesis with just a free
nuisance parameter (or parameter vector) $g\in G$, by putting a proper prior on all other
parameters and integrating them out. A similar construction for a
single parameter alternative hypothesis in the form of
\eqref{eq:standardform} can be applied in the non-fixed sample size
case.

\subsection{Strong Calibration}
\label{sec:strongcalibration}
Consider the setting, definitions and assumptions of the previous
subsection, with the additional assumptions and definitions made in
the beginning of Section~\ref{sec:genfreq}, in particular the
assumption of a.s.\ finite stopping time. For simplicity, from now on, we shall also assume equal prior odds, $\pi(H_0) = \pi(H_1) = 1/2$. We will
now show a strong calibration theorem for the Bayes factors $\beta_n =
(d \PM^{(n)}) / (d \QM^{(n)})(X^n)$ defined in terms of the Bayes
marginals $\PM$ and $\QM$ with the right Haar prior.  Thus
$\beta_{\tau}$ is defined as in \eqref{eq:def-Bt} with $\beta$ in the
role of $B$.
%Analogously to \eqref{eq:inducedb}, we define, for $k = 0,1$, $g \in G$, under eac%h $P_{k,g}$, and for each stopping rule $\tau$ satisfying our constraints,
%its induced measure on the Bayes factor $\beta_{\tau}$, as:
%\begin{align}\label{eq:inducedc}
%\mu^{(\tau)}_{(k,g),\beta}(\cdot )
%&: \B(\R_{>0}) \to [0,1]: A \mapsto P_{k,g}\left( \beta_{\tau}^{-1}(A)  \right).
%\end{align}
\begin{theorem}[Strong calibration under optional stopping]
	\label{thm:strong-calibration-under-os}
	Let $\tau$ be a stopping time satisfying our requirements, such
	that additionally, for each $n > m$, the event $\{ \tau = n \}$ is
	$\G_n$-measurable. Then, adopting the notation of
	\eqref{eq:induced}, for all $g \in G$, for
	$\mug{0,g}{\beta}{\tau}$-almost every $b > 0$, we have:
	\begin{align}\nonumber
	\frac{\dif \mug{1,g}{\beta}{\tau}}{\dif \mug{0,g}{\beta}{\tau}}(b) = b.
	\end{align}
	That means that the posterior odds remain calibrated under every stopping
	rule $\tau$ adapted to the quotient space filtration $\G_m, \G_{m+1}, \ldots$, under all $P_{0,g}$.
\end{theorem}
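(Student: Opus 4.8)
Strong calibration under optional stopping — proof plan.

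The plan is to reduce the variable-stopping-time statement to the fixed-sample-size result (Lemma~\ref{lem:strongfixedn}) via the passage lemma (Lemma~\ref{lem:caligali}), exactly as Corollary~\ref{cor:calibration-under-OS} was obtained in the marginal case, but now working \emph{pointwise} in $g \in G$ rather than under $\PM$. The key structural observation that makes this possible is the hypothesis that $\{\tau = n\}$ is $\G_n$-measurable: since the stopping event lives in the quotient $\sigma$-algebra, it can serve as the $\G_n$-measurable indicator $V_n$ in Lemma~\ref{lem:strongfixedn}. So first I would fix an arbitrary $g \in G$ and, for each $n \in \mathcal{T}_\tau$, set $V_n \coloneqq \ind[\tau = n]$; because $\{\tau=n\} \in \G_n$, this $V_n$ is $\G_n$-measurable, and the a.s.-finiteness assumption guarantees $\sum_n P_{k,g}(\tau=n) = 1$, so the relevant events have positive probability for $n \in \mathcal{T}_\tau$.

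Next I would instantiate Lemma~\ref{lem:strongfixedn} with this $V_n$ and with $B = \beta$ to obtain, for each $n \in \mathcal{T}_\tau$ and the chosen version of the Radon--Nikodym derivative,
\begin{align}\nonumber
\frac{P_{1,g}(\tau = n)}{P_{0,g}(\tau = n)} \cdot
\frac{\dif \mugcond{1,g}{\beta}{n}{\tau = n}}
{\dif \mugcond{0,g}{\beta}{n}{\tau= n}}(b) = b
\end{align}
for $\mugcond{0,g}{\beta}{n}{\tau=n}$-almost all $b$. This is precisely the fixed-sample-size calibration property \eqref{eq:ratio-OS-minibatch} required as the hypothesis of Lemma~\ref{lem:caligali}, with $c = 1$ (equal prior odds) and with $P_{0,g}, P_{1,g}$ in the role of $P_0, P_1$. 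Note that $P_{k,g}(\tau = n) = \mug{k,g}{\beta}{n}(\tau=n)$ because $\{\tau=n\}$ is $\G_n$-measurable and $\beta_n$ is a maximal-invariant-measurable (in particular $\G_n$-measurable) quantity, so the stopping probability is recoverable from the induced measure on $\beta_n$. Applying Lemma~\ref{lem:caligali} then yields, for $\mug{0,g}{\beta}{\tau}$-almost every $b$,
\begin{align}\nonumber
\frac{\dif \mug{1,g}{\beta}{\tau}}{\dif \mug{0,g}{\beta}{\tau}}(b) = b,
\end{align}
which is the assertion of the theorem; since $g$ was arbitrary, it holds for all $g \in G$.

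The main obstacle I anticipate is not the reduction itself but the careful verification that the Radon--Nikodym derivatives appearing in the three results can be chosen \emph{consistently} across the different $n$ and under the pointwise measures $P_{k,g}$, given that $\PM$ and $\QM$ are improper. In particular, Lemma~\ref{lem:strongfixedn} produces a version of the fixed-$n$ derivative that depends on the \emph{conditioning event} $\{V_n=1\}$, and one must check that these versions glue into the single derivative $\dif\mug{1,g}{\beta}{\tau}/\dif\mug{0,g}{\beta}{\tau}$ demanded by Lemma~\ref{lem:caligali} on the disjoint pieces $\{\tau=n\}$. A secondary technical point is confirming that the hypotheses of Lemma~\ref{lem:caligali} are genuinely met for the pointwise measures---namely that $\mug{0,g}{\beta}{n}$ and $\mug{1,g}{\beta}{n}$ are mutually absolutely continuous for each $g$ and each $n \in \mathcal{T}_\tau$, which follows from the standing mutual-absolute-continuity assumption on the $P_{k,g}$ together with the $\G_n$-measurability of $\beta_n$ established just after the cited Theorem of \citet{berger1998bayes}. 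Once these measurability-and-versions bookkeeping issues are settled, the argument is a direct two-step composition with no further analytic content.
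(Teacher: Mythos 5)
Your proposal is correct and follows essentially the same route as the paper's own proof: fix $g \in G$, apply Lemma~\ref{lem:strongfixedn} with $V_n = \ind[\tau=n]$ (which is where the $\G_n$-measurability of $\{\tau=n\}$ is needed) to verify the premise \eqref{eq:ratio-OS-minibatch} of Lemma~\ref{lem:caligali} with $c=1$, and then invoke Lemma~\ref{lem:caligali} with $P_{0,g}, P_{1,g}$ in the roles of $P_0, P_1$. The version-consistency bookkeeping you flag is a fair point but is handled implicitly by the almost-everywhere formulations of the two lemmas, exactly as in the paper.
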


\begin{proof}
	%It is straightforward to show that the quotient $\sigma$-algebras $\G_%m, \G_{m+1}, \ldots$ satisfy
	%$\G_n \subset \G_{n+1}$ for each $ n \geq m$.
	%
	Fix some $g \in G$. We simply first apply Lemma~\ref{lem:strongfixedn}
	with $V_n = \ind[\tau =n]$, which gives that the premise
	\eqref{eq:ratio-OS-minibatch} of Lemma~\ref{lem:caligali} holds with
	$c=1$ and $\beta_n$ in the role of $B_n$ (it is here that we need that $\tau_n$ is $\G_n$-measurable, otherwise we could not apply
	Lemma~\ref{lem:strongfixedn} with the required definition of $V_n$).
	We can now use
	Lemma~\ref{lem:caligali}
	% for stopping times adapted to $\G_m, \G_{m+1}, \ldots$,
	with $P_{0,g}$ in the role of $P_0$ to reach the desired conclusion for the chosen $g$. Since this works for all $g \in G$, the result follows.
\end{proof}

\paragraph{Example~\ref{example:A.1}, Continued: Admissible and
	Inadmissible Stopping Rules} We obtain strong calibration for the
one-sample $t$-test with respect to the nuisance parameter $\sigma$ (see
Example~\ref{example:A.1} above) when the stopping rule is adapted to
the quotient filtration $\G_m, \G_{m+1}, \ldots$. Under each
$P_{k,g} \in H_k$, the Bayes factors $\beta_m, \beta_{m+1}, \ldots$
define a random process on $\Omega $ such that each $\beta_n$ is
$\G_n$-measurable.  This means that a stopping time defined in terms
of a rule such as `stop at the smallest $t$ at which $\beta_t > 20$ or
$t = 10^6$' is allowed in the result above.  Moreover, if the stopping rule is a
function of a sequence of maximal invariants, like
$x_1 / |x_1|, x_2 / |x_1|, \ldots$, it is adapted to the filtration
$\G_m, \G_{m+1}, \ldots$
and we can likewise apply the result above. On the other hand, this
requirement is violated, for example, by a stopping rule that stops
when $\sum_{i=1}^j (x_i)^2$ exceeds some fixed value, since such a
stopping rule explicitly depends on the scale of the sampled data.

\subsection{Frequentist optional stopping}
\label{sec:strongfrequentist}
The special case of the following result for the one-sample Bayesian $t$-test
was proven in the master's thesis  \citep{hendriksen-2017-betting-as}. Here we extend the result to general group invariances.
\begin{theorem}[Frequentist optional stopping for composite null
	hypotheses with group invariance]\label{th:frequentist-os-with-nuisance-pars}
	Under the same conditions as in Section~\ref{sec:strongcalibration},  let $\tau$ be a stopping time such
	that, for each $n > m$, the event $\{ \tau = n \}$ is
	$\G_n$-measurable. Then, adopting the notation of
	\eqref{eq:induced}, for all $g \in G$, the stopped Bayes factor satisfies
	${\bf E}_{P_{0,g}}[ \beta_{\tau}] = \int_{{\mathbb R}_{> 0}}c   \dif \mug{0,g}{\beta}{\tau}(c) = 1$, so that, by the reasoning above Proposition~\ref{prop:safetest}, we have {\em for all $g \in G$:} $P_{0,g}(\frac1{\beta_\tau} \leq \alpha)  \leq \alpha$.
\end{theorem}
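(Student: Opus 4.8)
The plan is to obtain the expectation identity as an almost immediate consequence of the strong calibration theorem (Theorem~\ref{thm:strong-calibration-under-os}) and then to close with the Markov-inequality argument already recorded just above Proposition~\ref{prop:safetest}, the only difference being that everything is now carried out under each individual $P_{0,g}$ rather than under the marginal $\PM(\cdot \mid x^m)$. This is exactly the point: the hypotheses of the present theorem (a.s.\ finite $\tau$, and $\{\tau = n\}$ being $\G_n$-measurable for each $n > m$) are precisely those of Theorem~\ref{thm:strong-calibration-under-os}, so for every fixed $g \in G$ we may take for granted that $\rnd{\mug{1,g}{\beta}{\tau}}{\mug{0,g}{\beta}{\tau}}(b) = b$ holds for $\mug{0,g}{\beta}{\tau}$-almost every $b$. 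The frequentist guarantee is thus really a corollary of strong calibration plus Markov, mirroring how Proposition~\ref{prop:safetest} followed from Corollary~\ref{cor:calibration-under-OS}.

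Concretely, I would fix $g \in G$ and first rewrite the expectation as an integral against the induced measure on $\mathbb{R}_{>0}$, which is just the change-of-variables justified by the definition of $\mug{0,g}{\beta}{\tau}$ in \eqref{eq:induced} and is the content of the second equality in the statement. I would then substitute the calibration identity $b = \rnd{\mug{1,g}{\beta}{\tau}}{\mug{0,g}{\beta}{\tau}}(b)$ into the integrand and recognise the result as a Radon--Nikodym derivative integrated against its own base measure:
\[
{\bf E}_{P_{0,g}}[\beta_{\tau}]
= \int_{\mathbb{R}_{>0}} b \,\dif \mug{0,g}{\beta}{\tau}(b)
= \int_{\mathbb{R}_{>0}} \rnd{\mug{1,g}{\beta}{\tau}}{\mug{0,g}{\beta}{\tau}}(b)\,\dif \mug{0,g}{\beta}{\tau}(b)
= \mug{1,g}{\beta}{\tau}(\mathbb{R}_{>0}).
\]
It then remains to verify that this total mass equals $1$: since $P_{1,g}$ is a probability measure, the a.s.\ finiteness of $\tau$ together with the standing mutual absolute continuity assumption (which forces each $\beta_n$, and hence $\beta_{\tau}$, to take values in $(0,\infty)$ $P_{1,g}$-almost surely) gives $\mug{1,g}{\beta}{\tau}(\mathbb{R}_{>0}) = P_{1,g}(\beta_{\tau} \in \mathbb{R}_{>0}) = 1$.

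Finally, having established ${\bf E}_{P_{0,g}}[\beta_{\tau}] = 1 \leq 1$, I would apply Markov's inequality exactly as in the reasoning preceding Proposition~\ref{prop:safetest}, now with $Z = \beta_{\tau}$ under $P_{0,g}$: for $0 \leq \alpha \leq 1$ one gets $P_{0,g}(\beta_{\tau}^{-1} \leq \alpha) = P_{0,g}(\beta_{\tau} \geq 1/\alpha) \leq \alpha\,{\bf E}_{P_{0,g}}[\beta_{\tau}] = \alpha$, and since $g$ was arbitrary this holds simultaneously for all $g \in G$, which is the uniform frequentist guarantee sought. The substantive group-invariance work is entirely absorbed into the appeal to strong calibration, so the only step requiring genuine (if modest) care is the total-mass computation $\mug{1,g}{\beta}{\tau}(\mathbb{R}_{>0}) = 1$ --- that is, ruling out any escape of the mass of $\beta_{\tau}$ to $0$ or $\infty$ under $P_{1,g}$ --- which is precisely where a.s.\ finiteness of $\tau$ and mutual absolute continuity are indispensable; everything else is bookkeeping.
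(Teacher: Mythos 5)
Your proposal is correct and follows essentially the same route as the paper's own proof: apply Theorem~\ref{thm:strong-calibration-under-os} to replace $c$ by the Radon--Nikodym derivative inside the integral, identify the result as the total mass of $\mug{1,g}{\beta}{\tau}$ (which is $1$ since $P_{1,g}$ is a probability measure and $\tau$ is a.s.\ finite), and conclude via the Markov argument above Proposition~\ref{prop:safetest}. Your extra care in justifying $\mug{1,g}{\beta}{\tau}(\mathbb{R}_{>0})=1$ is a minor elaboration of a step the paper states without comment, not a different approach.
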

\begin{proof}
	We have
	\begin{align*}
	\int_{{\mathbb R}_{> 0}} \ c \  d \mug{0,g}{\beta}{\tau}(c)
	%\int_{\Omega} \beta_{\tau}    d P_{0,g} =
	%\int_{\Omega} \left(\frac{\dif \mug{1,g}{\beta}{\tau}}{\dif \mug{0,g}{\beta}{\t%au} } \right)(\beta_{\tau})  d P_{0,g}\\
	&= \int_{{\mathbb R}_{> 0}} \frac{\dif \mug{1,g}{\beta}{\tau}}{\dif \mug{0,g}{\beta}{\tau} } (c)   d \mug{0,g}{\beta}{\tau}(c)
	= \int_{{\mathbb R}_{> 0}} d\mug{1,g}{\beta}{\tau}(c) = 1.
	\end{align*}
	where the first equality follows directly from Theorem~\ref{thm:strong-calibration-under-os} and the final equality follows because $P_{1,g}$ is a probability measure, integrating to 1.
\end{proof}
Analogously to Corollary~\ref{cor:thailand}, the desired result now follows by plugging in a particular
stopping rule: let \(S: \bigcup_{i=m}^{\infty}  \X^i \to \{0,1\}\) be
the frequentist sequential test  defined by setting, for all $n> m$,
$x^n \in \samplespacen$:  $S(x^n) = 1$ if and only if $\beta_n \geq 1/\alpha$.
\begin{corollary}\label{cor:bali}
	Let $t^* \in \{m+1, m+2, \ldots \} \cup \{ \infty \}$ be the smallest $t^* > m$ for which $\beta_{t^*}^{-1} \leq \alpha$. Then for  arbitrarily large $T$,
	when applied to the stopping rule $\tau \coloneqq \min \{T, t^*\}$, we find that {\em for all $g \in G$}:
	$$P_{0,g}(\exists n, m < n \leq T: S(X^n) = 1 \mid x^m)
	=P_{0,g}(\exists n, m < n \leq T: \beta^{-1}_n \leq \alpha \mid x^m)
	\leq \alpha.$$
\end{corollary}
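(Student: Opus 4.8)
The plan is to transcribe the derivation of Corollary~\ref{cor:thailand} from Proposition~\ref{prop:safetest}, replacing the marginal bound by the \emph{uniform} bound of Theorem~\ref{th:frequentist-os-with-nuisance-pars}. The whole argument amounts to reducing the ``$\exists n$'' rejection event to a single event about the stopped Bayes factor $\beta_\tau$, after which the theorem supplies the bound for every $g$ at once. The only genuinely new point relative to the marginal setting is checking that the truncated stopping rule meets the $\G_n$-measurability hypothesis of Theorem~\ref{th:frequentist-os-with-nuisance-pars}.

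First I would verify that $\tau \coloneqq \min\{T, t^*\}$ is admissible in the sense required by the theorem, i.e.\ that $\{\tau = n\}$ is $\G_n$-measurable for every $n > m$. Since each $\beta_j$ is constant on orbits and hence $\G_j$-measurable (as noted in Section~\ref{sec:strongfixed}), the first-crossing time $t^*$ of the process $j \mapsto \beta_j^{-1}$ below the level $\alpha$ is a stopping time adapted to the quotient filtration $\G_m, \G_{m+1}, \ldots$, and truncation at the deterministic horizon $T$ preserves this property. Thus $\tau$ satisfies exactly the hypothesis needed to invoke Theorem~\ref{th:frequentist-os-with-nuisance-pars}. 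This is the step to treat with care: it is precisely here that scale-dependent rules such as ``stop when $\sum_i x_i^2$ exceeds a constant'' would be excluded, since they are \emph{not} carried by the quotient filtration.

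Next, the first equality in the statement is immediate from the definition of $S$: by construction $S(x^n) = 1$ if and only if $\beta_n \geq 1/\alpha$, equivalently $\beta_n^{-1} \leq \alpha$, so the two rejection events coincide pointwise. I would then reduce the ``$\exists n$'' event to a single event about $\beta_\tau$. By the definition of $t^*$, on the event $\{\exists n,\ m < n \leq T:\ \beta_n^{-1} \leq \alpha\}$ we have $t^* \leq T$, hence $\tau = t^*$ and $\beta_\tau^{-1} = \beta_{t^*}^{-1} \leq \alpha$; on the complementary event every $\beta_n^{-1} > \alpha$ for $m < n \leq T$, so $\tau = T$ and $\beta_\tau^{-1} > \alpha$. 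Therefore
\[
\{\exists n,\ m < n \leq T:\ \beta_n^{-1} \leq \alpha\} = \{\beta_\tau^{-1} \leq \alpha\}.
\]

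Finally I would apply Theorem~\ref{th:frequentist-os-with-nuisance-pars} to this admissible $\tau$, which yields $P_{0,g}(\beta_\tau^{-1} \leq \alpha) \leq \alpha$ for \emph{every} $g \in G$ (and, exactly as in the proof of Corollary~\ref{cor:thailand}, the same argument may be run conditionally on the initial sample $x^m$). Combined with the event identity above and the first equality, this gives the stated bound on $P_{0,g}(\exists n,\ m < n \leq T:\ S(X^n) = 1 \mid x^m)$ uniformly in $g$, which is the assertion. The main obstacle is therefore structural rather than analytic: one must ensure the stopping rule lives on the maximal-invariant (quotient) filtration so that the strong theorem applies; once that is secured the remainder is a verbatim copy of the proof of Corollary~\ref{cor:thailand}.
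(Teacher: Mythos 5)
Your proposal is correct and follows essentially the same route the paper intends: the paper gives no separate proof of Corollary~\ref{cor:bali}, stating only that it follows ``analogously to Corollary~\ref{cor:thailand}'' by plugging the truncated first-crossing rule $\tau = \min\{T,t^*\}$ into Theorem~\ref{th:frequentist-os-with-nuisance-pars}, which is exactly what you do. Your explicit check that $\tau$ is adapted to the quotient filtration (since each $\beta_j$ is $\G_j$-measurable), and your identification of the ``$\exists n$'' rejection event with $\{\beta_\tau^{-1}\leq\alpha\}$, correctly supply the details the paper leaves implicit.
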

The corollary implies that the test $S$ is robust under optional stopping in the frequentist sense relative to $H_0$  (Definition~\ref{def:freqrobust}).

\paragraph{Example~\ref{example:A.1} (continued)}
When we choose a stopping rule that is $(\G_m, \G_{m+1}, \ldots)$-measurable,
the hypothesis test is robust under (semi-)frequentist
optional stopping. This holds for example, for the one- and two-sample $t$-test
\citep{rouder-2009-bayes}, Bayesian ANOVA \citep{rouder-2012-default-bayes},
and Bayesian linear regression \citep{liang2008mixtures}. Again, for
stopping rules that are not  $(\G_m, \G_{m+1}, \ldots)$-measurable,
robustness under frequentist optional stopping cannot be guaranteed
and could reasonably be presumed to be violated. The violation of
robustness under optional stopping is hard to demonstrate
experimentally as frequentist Bayes factor tests are usually quite
conservative in approaching the asymptotic significance level
$\alpha$.

\section{Concluding Remarks}\label{sec:concluding remarks}
We have identified three types of `handling optional stopping':
$\tau$-independence, calibration and semi-frequentist. We extended the
corresponding definitions and results to general sample spaces with
potentially improper priors. For the special case of models $H_0$ and
$H_1$ sharing a nuisance parameter with a group invariance structure,
we showed stronger versions of calibration and semi-frequentist
robustness to optional stopping. A couple of remarks are in
order. First, one of the remarkable properties of the right Haar prior
is that, under some additional conditions on $P_{0,g}$ and $P_{1,g}$
in \eqref{eq:definitiePkg}, $\beta_m = \beta(x^m)=1$ for all
$x^m \in \initialspace$, implying that equal prior odds lead to equal
posterior odds after a minimal sample, no matter what the minimal
sample is \citep{berger-1998-bayes-factors}. One might conjecture that
our results rely on this property, but this is not the case: in
general, one can have $\beta(x^m)\neq 1$, yet our results still hold.
For example, in the Bayesian $t$-test, Example~\ref{example:A.1},
$m=1$ and $\beta(x^1)=1$ can be guaranteed only if the prior
$\pi_{\delta}$ on $\delta$ is symmetric around $0$; but our
calibration and frequentist robustness results hold irrespective of
whether it is symmetric or not.

Secondly, in multiple-parameter problems, the suitable transformation group acting on the parameter space may not be unique, in which case there are multiple possible right Haar priors, see Example~1.2 and 1.3 in \citep{berger2015overall} and \citep{berger2008objective}. However, in all examples we considered and further know of, this does not lead to ambiguity, because different transformation groups give rise to different sets $H_0$ of invariant null hypotheses.

As a third remark, it is worth
noting that --- as is immediate from the proofs --- all our
group-invariance results continue to hold in the setting with $H'_k$ as
in \eqref{eq:there}, and the definition of the Bayes marginal
$P_{k,e}$ relative to $\theta'$ as in \eqref{eq:almost} replaced by a
probability measure on $(\Omega,\F)$ that is not necessarily of the
Bayes marginal form. The results work for any probability measure; in
particular one can take the alternatives for the Bayes marginal with
proper prior that are considered in the the minimum description length
and sequential prediction literature \citep{BarronRY98,Grunwald07}
under the name of {\em universal distribution\/} relative to
$\{P_{\theta'}\mid \theta' \in \Theta' \}$; examples include the
prequential or `switch' distributions
considered by \cite{PasG18}.

As a fourth and final remark, a sizable fraction of Bayesian
statisticians is wary of using improper priors at all. An important
(though not the only) reason is that their use often leads to some
form of the {\em marginalization paradox\/} described by
\cite{dawid1973marginalization}. It is thus useful to stress that in
the context of Bayes factor hypothesis testing, the right Haar prior
is immune at least to this particular paradox. In an informal
nutshell, the marginalization paradox occurs if the following happens:
(a) the Bayes posterior $\pi(\zeta \mid X^n)$ for the quantity of
interest $\zeta$ based on prior $\pi(\zeta,g)$ with improper marginal
on $g$, only depends on the data $X^{n}$ through the maximal invariant
$U_n$, i.e. $\pi(\zeta \mid X^n) = f(U_n(X^n))$ for some function $f$,
yet (b) there exists no prior $\pi'$ on $\zeta$ such that the
corresponding posterior $\pi'(\zeta \mid U_n(X^n)) = f(U_n(X^n))$.  In
words, the result of Bayesian updating based on the full data $X^n$
only depends on the maximal invariant $U^n$; but Bayesian updating
directly based on $U^n$ can never give the same result --- a paradox
indeed. While in general, this can happen even if $g$ is equipped with
the right Haar prior [Case 1, page
199]\citep{dawid1973marginalization}, Berger et.\ al.'s Theorem 2.1
(reproduced in Section~\ref{sec:strongfixed} in our paper) implies
that it does not occur in the context of Bayes factor testing, where
$\zeta \in \{H_0, H_1\}$, and $H_0$ and $H_1$ are null and
alternatives satisfying the requirements of
Section~\ref{sec:group}. Berger's theorem expresses that for all
values of the nuisance parameter $g \in G$, the likelihood ratio
${\dif \mug{1,g}{U}{n}} /{\dif \mug{0,g}{U}{n}} \left( U_n(X^n)
\right)$ based on $U_n(X^n)$ is equal to the Bayes factor based on
$X^n$ with the right Haar prior on $g$, so that the paradox cannot
occur.

\paragraph{Acknowledgements}   We are grateful to Wouter Koolen, for extremely useful conversations that helped with the math, and to Jeff Rouder, for providing inspiration and insights that sparked off this research, and to Aaditya Ramdas, who brought \cite{lai1976confidence} to our attention.

\DeclareRobustCommand{\VANDER}[3]{#3}
\bibliographystyle{abbrvnat}
\bibliography{bib/martingale-bib}

\begin{thebibliography}{42}
\providecommand{\natexlab}[1]{#1}
\providecommand{\url}[1]{\texttt{#1}}
\expandafter\ifx\csname urlstyle\endcsname\relax
  \providecommand{\doi}[1]{doi: #1}\else
  \providecommand{\doi}{doi: \begingroup \urlstyle{rm}\Url}\fi

\bibitem[Andersson(1982)]{andersson-1982-distr-maxim}
S.~Andersson.
\newblock Distributions of maximal invariants using quotient measures.
\newblock \emph{The Annals of Statistics}, 10\penalty0 (3):\penalty0 955--961,
  1982.
\newblock ISSN 00905364.

\bibitem[Barnard(1947)]{barnard1947review}
G.~A. Barnard.
\newblock Review of {\em {s}equential {a}nalysis\/} by {A}braham {W}ald.
\newblock \emph{Journal of the American Statistical Association}, 42\penalty0
  (240), 1947.

\bibitem[Barnard(1949)]{barnard1949statistical}
G.~A. Barnard.
\newblock Statistical inference.
\newblock \emph{Journal of the Royal Statistical Society. Series B
  (Methodological)}, 11\penalty0 (2):\penalty0 115--149, 1949.

\bibitem[Barron et~al.(1998)Barron, Rissanen, and Yu]{BarronRY98}
A.~Barron, J.~Rissanen, and B.~Yu.
\newblock The minimum description length principle in coding and modeling.
\newblock \emph{IEEE Transactions on Information Theory}, 44\penalty0
  (6):\penalty0 2743--2760, 1998.
\newblock \doi{10.1109/18.720554}.

\bibitem[Bayarri et~al.(2012)Bayarri, Berger, Forte, and
  Garc{\'\i}a-Donato]{bayarri2012criteria}
M.~J. Bayarri, J.~O. Berger, A.~Forte, and G.~Garc{\'\i}a-Donato.
\newblock Criteria for {B}ayesian model choice with application to variable
  selection.
\newblock \emph{The Annals of statistics}, 40\penalty0 (3):\penalty0
  1550--1577, 2012.

\bibitem[Bayarri et~al.(2016)Bayarri, Benjamin, Berger, and
  Sellke]{bayarri2016rejection}
M.~J. Bayarri, D.~J. Benjamin, J.~O. Berger, and T.~M. Sellke.
\newblock Rejection odds and rejection ratios: A proposal for statistical
  practice in testing hypotheses.
\newblock \emph{Journal of Mathematical Psychology}, 72:\penalty0 90--103,
  2016.

\bibitem[Berger and Wolpert(1988)]{BergerW88}
J.~O. Berger and R.~L. Wolpert.
\newblock \emph{The Likelihood Principle}.
\newblock Institute of Mathematical Statistics, Hayward, CA, 2nd edition, 1988.

\bibitem[Berger et~al.(1998{\natexlab{a}})Berger, Pericchi, and
  Varshavsky]{berger-1998-bayes-factors}
J.~O. Berger, L.~R. Pericchi, and J.~A. Varshavsky.
\newblock Bayes factors and marginal distributions in invariant situations.
\newblock \emph{Sankhy{\=a}: The Indian Journal of Statistics, Series A}, pages
  307--321, 1998{\natexlab{a}}.

\bibitem[Berger et~al.(1998{\natexlab{b}})Berger, Pericchi, and
  Varshavsky]{berger1998bayes}
J.~O. Berger, L.~R. Pericchi, and J.~A. Varshavsky.
\newblock Bayes factors and marginal distributions in invariant situations.
\newblock \emph{Sankhy{\=a}: The Indian Journal of Statistics, Series A}, pages
  307--321, 1998{\natexlab{b}}.

\bibitem[Berger et~al.(2008)Berger, Sun, et~al.]{berger2008objective}
J.~O. Berger, D.~Sun, et~al.
\newblock Objective priors for the bivariate normal model.
\newblock \emph{The Annals of Statistics}, 36\penalty0 (2):\penalty0 963--982,
  2008.

\bibitem[Berger et~al.(2015)Berger, Bernardo, Sun, et~al.]{berger2015overall}
J.~O. Berger, J.~M. Bernardo, D.~Sun, et~al.
\newblock Overall objective priors.
\newblock \emph{Bayesian Analysis}, 10\penalty0 (1):\penalty0 189--221, 2015.

\bibitem[Conway(2013)]{conway-2013-course-in}
J.~B. Conway.
\newblock \emph{A course in functional analysis}, volume~96.
\newblock Springer Science \& Business Media, 2013.

\bibitem[Dass(1998)]{dass-1998-unified-bayes}
S.~C. Dass.
\newblock \emph{Unified {B}ayesian and conditional frequentist testing
  procedures}.
\newblock PhD thesis, University of Michigan, 1998.

\bibitem[Dass and Berger(2003)]{dass-2003-unified-condit}
S.~C. Dass and J.~O. Berger.
\newblock Unified conditional frequentist and {B}ayesian testing of composite
  hypotheses.
\newblock \emph{Scandinavian Journal of Statistics}, 30\penalty0 (1):\penalty0
  193–210, Mar 2003.
\newblock ISSN 1467-9469.
\newblock \doi{10.1111/1467-9469.00326}.

\bibitem[Dawid et~al.(1973)Dawid, Stone, and Zidek]{dawid1973marginalization}
A.~P. Dawid, M.~Stone, and J.~V. Zidek.
\newblock Marginalization paradoxes in bayesian and structural inference.
\newblock \emph{Journal of the Royal Statistical Society: Series B
  (Methodological)}, 35\penalty0 (2):\penalty0 189--213, 1973.

\bibitem[Deng et~al.(2016)Deng, Lu, and Chen]{deng2016continuous}
A.~Deng, J.~Lu, and S.~Chen.
\newblock Continuous monitoring of {A/B} tests without pain: Optional stopping
  in {B}ayesian testing.
\newblock In \emph{Data Science and Advanced Analytics (DSAA), 2016 IEEE
  International Conference on}, pages 243--252. IEEE, 2016.

\bibitem[Eaton(1989)]{eaton-1989-group-invar}
M.~L. Eaton.
\newblock Group invariance applications in statistics.
\newblock \emph{Regional Conference Series in Probability and Statistics},
  1:\penalty0 i--133, 1989.
\newblock ISSN 19355912.

\bibitem[Edwards et~al.(1963)Edwards, Lindman, and
  Savage]{edwards-1963-bayes-statis}
W.~Edwards, H.~Lindman, and L.~J. Savage.
\newblock Bayesian statistical inference for psychological research.
\newblock \emph{Psychological Review}, 70\penalty0 (3):\penalty0 193–242,
  1963.
\newblock ISSN 0033-295X.
\newblock \doi{10.1037/h0044139}.

\bibitem[Good(1991)]{Good1991}
I.~J. Good.
\newblock C383. {A} comment concerning optional stopping.
\newblock \emph{Journal of Statistical Computation and Simulation}, 39\penalty0
  (3):\penalty0 191--192, 1991.

\bibitem[Gr{\"u}nwald et~al.(2019)Gr{\"u}nwald, {\VANDER{Heide}{De}{de}}~Heide,
  and Koolen]{safetesting}
P.~Gr{\"u}nwald, R.~{\VANDER{Heide}{De}{de}}~Heide, and W.~Koolen.
\newblock Safe testing.
\newblock \emph{arXiv preprint arXiv:1906.07801}, 2019.

\bibitem[Gr\"unwald(2007)]{Grunwald07}
P.~D. Gr\"unwald.
\newblock \emph{The Minimum Description Length Principle}.
\newblock MIT Press, Cambridge, MA, 2007.

\bibitem[{\VANDER{Heide}{De}{de}}~Heide and
  Gr{\"u}nwald(2018)]{heide-2017-why-option}
R.~{\VANDER{Heide}{De}{de}}~Heide and P.~Gr{\"u}nwald.
\newblock Why optional stopping is a problem for {B}ayesians.
\newblock \emph{arXiv preprint arXiv:1708.08278}, 2018.

\bibitem[Hendriksen(2017)]{hendriksen-2017-betting-as}
A.~A. Hendriksen.
\newblock Betting as an alternative to $p$-values.
\newblock Master's thesis, Leiden University, Dept. of Mathematics, 2017.

\bibitem[Howard et~al.(2018)Howard, Ramdas, McAuliffe, and
  Sekhon]{howard2018uniform}
S.~R. Howard, A.~Ramdas, J.~McAuliffe, and J.~Sekhon.
\newblock Uniform, nonparametric, non-asymptotic confidence sequences.
\newblock \emph{arXiv preprint arXiv:1810.08240}, 2018.

\bibitem[Jamil et~al.(2016)Jamil, Ly, Morey, Love, Marsman, and
  Wagenmakers]{jamil-2016-default-gunel}
T.~Jamil, A.~Ly, R.~D. Morey, J.~Love, M.~Marsman, and E.-J. Wagenmakers.
\newblock Default "gunel and dickey" bayes factors for contingency tables.
\newblock \emph{Behavior Research Methods}, 49\penalty0 (2):\penalty0 638--652,
  Jun 2016.
\newblock ISSN 1554-3528.
\newblock \doi{10.3758/s13428-016-0739-8}.

\bibitem[Jeffreys(1961)]{jeffreys-1961-theory-of}
H.~Jeffreys.
\newblock \emph{Theory of Probability}.
\newblock Oxford, Oxford, England, 1961.

\bibitem[John et~al.(2012)John, Loewenstein, and
  Prelec]{john-2012-measur-preval}
L.~K. John, G.~Loewenstein, and D.~Prelec.
\newblock Measuring the prevalence of questionable research practices with
  incentives for truth telling.
\newblock \emph{Psychological science}, 2012.

\bibitem[Lai(1976)]{lai1976confidence}
T.~L. Lai.
\newblock On confidence sequences.
\newblock \emph{The Annals of Statistics}, 4\penalty0 (2):\penalty0 265--280,
  1976.

\bibitem[Liang et~al.(2008)Liang, Paulo, Molina, Clyde, and
  Berger]{liang2008mixtures}
F.~Liang, R.~Paulo, G.~Molina, M.~A. Clyde, and J.~O. Berger.
\newblock Mixtures of g priors for {B}ayesian variable selection.
\newblock \emph{Journal of the American Statistical Association}, 103\penalty0
  (481):\penalty0 410--423, 2008.
\newblock \doi{10.1198/016214507000001337}.

\bibitem[Lindley(1957)]{lindley-1957-statis-parad}
D.~V. Lindley.
\newblock A statistical paradox.
\newblock \emph{Biometrika}, 44\penalty0 (1/2):\penalty0 187--192, Jun 1957.
\newblock ISSN 0006-3444.
\newblock \doi{10.2307/2333251}.

\bibitem[{\VANDER{Pas}{Van der}{van der}}~Pas and Gr\"unwald(2018)]{PasG18}
S.~{\VANDER{Pas}{Van der}{van der}}~Pas and P.~D. Gr\"unwald.
\newblock Almost the best of three worlds: Risk, consistency and optional
  stopping for the switch criterion in nested model selection.
\newblock \emph{Statistica Sinica}, 28\penalty0 (1):\penalty0 229--253, 2018.

\bibitem[Proschan et~al.(2006)Proschan, Lan, and
  Wittes]{proschan2006statistical}
M.~A. Proschan, K.~G. Lan, and J.~T. Wittes.
\newblock \emph{Statistical monitoring of clinical trials: a unified approach}.
\newblock Springer Science \& Business Media, 2006.

\bibitem[Raiffa and Schlaifer(1961)]{RaiffaS61}
H.~Raiffa and R.~Schlaifer.
\newblock \emph{Applied Statistical Decision Theory}.
\newblock Harvard University Press, Cambridge, MA, 1961.

\bibitem[Rouder(2014)]{rouder-2014-option}
J.~N. Rouder.
\newblock Optional stopping: No problem for {B}ayesians.
\newblock \emph{Psychonomic Bulletin \& Review}, 21\penalty0 (2):\penalty0
  301–308, Mar 2014.
\newblock ISSN 1531-5320.
\newblock \doi{10.3758/s13423-014-0595-4}.

\bibitem[Rouder et~al.(2009)Rouder, Speckman, Sun, Morey, and
  Iverson]{rouder-2009-bayes}
J.~N. Rouder, P.~L. Speckman, D.~Sun, R.~D. Morey, and G.~Iverson.
\newblock Bayesian t tests for accepting and rejecting the null hypothesis.
\newblock \emph{Psychonomic Bulletin \& Review}, 16\penalty0 (2):\penalty0
  225–237, Apr 2009.
\newblock ISSN 1531-5320.
\newblock \doi{10.3758/pbr.16.2.225}.

\bibitem[Rouder et~al.(2012)Rouder, Morey, Speckman, and
  Province]{rouder-2012-default-bayes}
J.~N. Rouder, R.~D. Morey, P.~L. Speckman, and J.~M. Province.
\newblock Default {B}ayes factors for {ANOVA} designs.
\newblock \emph{Journal of Mathematical Psychology}, 56\penalty0 (5):\penalty0
  356--374, 2012.

\bibitem[Sanborn and Hills(2014)]{sanborn-2013-frequen-implic}
A.~N. Sanborn and T.~T. Hills.
\newblock The frequentist implications of optional stopping on {B}ayesian
  hypothesis tests.
\newblock \emph{Psychonomic Bulletin \& Review}, 21\penalty0 (2):\penalty0
  283–300, Oct 2014.
\newblock ISSN 1531-5320.
\newblock \doi{10.3758/s13423-013-0518-9}.

\bibitem[Sch\"{o}nbrodt et~al.(2017)Sch\"{o}nbrodt, Wagenmakers, Zehetleitner,
  and Perugini]{schonbrodt-2017-sequen-hypot}
F.~D. Sch\"{o}nbrodt, E.-J. Wagenmakers, M.~Zehetleitner, and M.~Perugini.
\newblock Sequential hypothesis testing with {B}ayes factors: Efficiently
  testing mean differences.
\newblock \emph{Psychological Methods}, 22\penalty0 (2):\penalty0 322–339,
  2017.
\newblock ISSN 1082-989X.
\newblock \doi{10.1037/met0000061}.

\bibitem[Shafer et~al.(2011)Shafer, Shen, Vereshchagin, and
  Vovk]{shafer2011test}
G.~Shafer, A.~Shen, N.~Vereshchagin, and V.~Vovk.
\newblock Test martingales, {B}ayes factors and p-values.
\newblock \emph{Statistical Science}, 26\penalty0 (1):\penalty0 84--101, 02
  2011.
\newblock \doi{10.1214/10-STS347}.

\bibitem[Wagenmakers(2007)]{wagenmakers-2007-pract-solut}
E.-J. Wagenmakers.
\newblock A practical solution to the pervasive problems of p values.
\newblock \emph{Psychonomic Bulletin \& Review}, 14\penalty0 (5):\penalty0
  779–804, Oct 2007.
\newblock ISSN 1531-5320.
\newblock \doi{10.3758/bf03194105}.

\bibitem[Wijsman(1990)]{wijsman-1990-invar}
R.~A. Wijsman.
\newblock \emph{Invariant measures on groups and their use in statistics}.
\newblock Institute of Mathematical Statistics, 1990.
\newblock ISBN 9780940600195.

\bibitem[Yu et~al.(2014)Yu, Sprenger, Thomas, and
  Dougherty]{yu-2013-when-decis}
E.~C. Yu, A.~M. Sprenger, R.~P. Thomas, and M.~R. Dougherty.
\newblock When decision heuristics and science collide.
\newblock \emph{Psychonomic Bulletin \& Review}, 21\penalty0 (2):\penalty0
  268–282, Sep 2014.
\newblock ISSN 1531-5320.
\newblock \doi{10.3758/s13423-013-0495-z}.

\end{thebibliography}

\appendix

\section{Group theoretic preliminaries}\label{app:groupprel}

We start with some group-theoretical preliminaries; for more details,
see e.g.\  \citep{eaton-1989-group-invar, wijsman-1990-invar,
	andersson-1982-distr-maxim}.

\begin{definition}[Topological space] A non-empty set $S$ together with a fixed collection of subsets $\mathcal{T}$ is called a \emph{topological space} $T = (S, \mathcal{T})$ if
	\begin{enumerate}[(i)]
		\item $S, \emptyset \in \mathcal{T}$,
		\item $U \cap V \in \mathcal{T}$ for any two sets $U, V \in \mathcal{T}$, and
		\item $S_1 \cup S_2 \in \mathcal{T}$ for any collections of sets $S_1, S_2 \subseteq \mathcal{T}$.
	\end{enumerate}
\end{definition}
The collection $\mathcal{T}$ is called a \emph{topology} for $S$, and its members are called the \emph{open sets} of $T$. A topological space $T$ is called \emph{Hausdorff} if for any two distinct points $x, y \in T$ there exist disjoint open subsets $U, V$ of $T$ containing one point each.

\begin{definition}[(Local) compactness] A topological space $T$ is \emph{compact} if every \emph{open cover}, that is, every collection $\C$ of open sets of $T$
	\begin{align*}
	T = \bigcup_{U \in \C} U,
	\end{align*}
	has a \emph{finite subcover}: a finite subcollection $\mathcal{F} \in \C$ such that
	\begin{align*}
	T = \bigcup_{V \in \mathcal{F}} V.
	\end{align*}
	It is \emph{locally compact} if for every $x \in T$ there exist an open set $U$ such that $x \in U$ and the closure of $U$, denoted by $\text{Cl}(U)$, is compact, that is, the union of $U$ and all its limit points in $T$ is compact. We can also formulate this as each $x$ having a neighborhood $U$ such that $\text{Cl}(U)$ is compact.
\end{definition}

\begin{example}[Locally compact Hausdorff spaces] The reals $\mathbb{R}$ and the Euclidean spaces $\mathbb{R}^n$ together with the Euclidean topology (also called the \emph{usual topology}) are \emph{locally compact Hausdorff spaces}. $\mathbb{R}^n$ (for $n \in \mathbb{N}$) is locally compact because any open ball $B(x, r)$ has a compact closure $\text{Cl}(B(x,r)) = \{y \in \mathbb{R}^n ; d(x,y) \leq \epsilon \}$, where $d(x,y)$ is the Euclidean metric. Any discrete space is locally compact and Hausdorff as well, as any singleton is a neighborhood that equals its closure, and it is compact only if it is finite. Infinite dimensional Banach spaces (function spaces) are for example not locally compact.
\end{example}

\begin{definition}[Group] A set $G$ together with a binary operation $\circ$, often called the \emph{group law}, is called a \emph{group} when
	\begin{enumerate}[(i)]
		\item there exists an identity element $e \in G$ for the group law $\circ$,
		\item for every three elements $a, b, c \in G$, we have $(a \circ b) \circ c = a \circ (b \circ c)$ (associativity), and
		\item for each element $a \in G$, there exists an inverse element, $a^\dagger \in G$, with $a \circ a^\dagger = a^\dagger \circ a = e$.
	\end{enumerate}
\end{definition}

\paragraph{Transformation groups} A group that consists of a set $G$ of transformations on some set $S$ is called a \emph{transformation group}. We also say that \emph{the group $G$ acts on the set $S$}. A transformation is a mapping from $S$ to itself that preserves certain properties, such as isometries in the Euclidean plane. Transformation groups are usually not commutative, that is $a \circ b \neq b \circ a$ for $a, b \in G$.

\begin{definition}[Topological group] A topological space $G$ that is also a group is called a \emph{topological group} when the group operation $\circ$ is continuous, that is, for $a, b \in G$, we have that the operations of product
	\begin{enumerate}[(i)]
		\item $G \times G \rightarrow G : (a, b) \mapsto a \circ b$, and taking the inverse
		\item $G \rightarrow G$ : $a \mapsto a^\dagger$,
	\end{enumerate}
	are continuous, where $G \times G$ has the product topology.
\end{definition}
A topological group for which the underlying topology is locally compact and Hausdorff, is called a \emph{locally compact group}.

\begin{definition}[\citet{eaton-1989-group-invar}, Definition 2.1]
	Let $Y$ be a set, and let \(G\) be a group with identity element
	\(e\).  A function $F: Y \times G \to Y$ satisfying
	\begin{enumerate}
		\item $F(y,e) = y, \,\,\, y \in Y$
		\item $F(y, g_1 g_2) = F(F(y, g_1), g_2),\,\,\, g_1, g_2 \in G, y \in Y$
	\end{enumerate}
	specifies $G$ acting on the right of $Y$.
\end{definition}
In practice, $F$ is omitted: we will write $y \cdot g$ for a group
element $g$ acting on the right of $y \in Y$. For a subset
$A \subseteq Y$, we write $A \cdot g \coloneqq \{a \cdot g \mid a \in A\}$.

\begin{definition}[\citet{conway-2013-course-in}, Example~1.11]\label{def:Haar-prior}
	Let \(G\) be a locally compact topological group. Then the \emph{right invariant
		Haar}
	measure (in short: right Haar measure) for \(G\) is a Borel measure \(\nu\)
	satisfying
	\begin{enumerate}
		\item $\nu(A) > 0$ for every nonempty open set $A \subseteq G$,
		\item $\nu(K) < \infty$ for every compact set $K \subseteq G$,
		\item\label{item:haar-measure-right-specification} $\nu(A \cdot g) = \nu(A)$ for
		every $g \in G$ and every measurable $A \subseteq G$.
	\end{enumerate}
\end{definition}

\section{Proofs Omitted from Main Text}
\label{app:proofs}
%\subsection{Proofs for Section~\ref{sec:generalcalibration}}
%\label{app:proofscalibration}
\begin{proof}{\bf [of Lemma~\ref{lem:caligali}]}
	Let \(A \subset {\mathbb R}_{> 0}\) be any Borel measurable set. In
	the equations below, the sum and integral can be swapped due to the
	monotone convergence theorem and the fact that $B_{\tau}$ is a
	positive function.
	%Because the function \(\ind[\betat \in
	%A]\ind[\tau=j]\) is \(\F_j\)-measurable for all \(j\), we can replace
	%\(\dif\QM\) by \(\dif\Qj\), and \(\dif\Pj\) by \(\dif\PM\).
	%
	\begin{align*}
	\int_A \dif \mugeneral{P_1}{B}{\tau}
	&= \int_{\Omega} \ind[B_{\tau} \in A] \,\dif \mugeneral{P_1}{B}{\tau} \\
	&= \sum_{n=0}^{\infty} \int_{\samplespacen} \ind[B_{\tau} \in A] \ind[\tau=n]  \,\dif P^{(n)}_1 \\
	&\overset{(3)}{=} \sum_{n=0}^{\infty} \int_{\samplespacen} \ind[B_n \in A] \ind[\tau=n]
	\, P^{(n)}_1(\tau = n) \cdot \dif P^{(n)}_1(\cdot \mid \tau = n)\\
	%&=
	%\rnd{P^{(n)}_1}{P^{(n)}_0} \,\dif P^{(n)}_0 \\
	&= \sum_{n=0}^{\infty} \int_{\samplespacen} \ind[B_n \in A]
	\, P^{(n)}_1(\tau = n) \cdot \dif P^{(n)}_1(\cdot \mid \tau = n) \\
	&\overset{(5)}{=} \sum_{n=0}^{\infty} \int_{r > 0} \ind[r \in A]
	\, P^{(n)}_1(\tau = n) \cdot \dif \mugencond{P_1}{B}{n}{\tau = n} \\
	&= \sum_{n=0}^{\infty} \int_{r > 0} \ind[r \in A]
	\rnd{\mugencond{P_1}{B}{n}{\tau = n}}{\mugencond{P_0}{B}{n}{\tau = n}} (r)
	\,P^{(n)}_1(\tau = n) \cdot \dif \mugencond{P_0}{B}{n}{\tau = n}  \\
	&\overset{(*)}{=} \sum_{n=0}^{\infty} \int_{r > 0} \ind[r \in A]
	\frac{P^{(n)}_0(\tau = n)}{P^{(n)}_1(\tau = n)} \cdot
	r \cdot P^{(n)}_1(\tau = n) \cdot \dif \mugencond{P_0}{B}{n}{\tau = n} \\
	&= \sum_{n=0}^{\infty} \int_{r > 0} \ind[r \in A]
	r \, P^{(n)}_0(\tau = n) \cdot \dif \mugencond{P_0}{B}{n}{\tau = n}  \\
	&= \sum_{n=0}^{\infty} \int_{\samplespacen} \ind[B_n \in A]
	\cdot B_n  \cdot P^{(n)}_0(\tau = n) \cdot \dif P_0^{(n)}(\cdot \mid \tau = n)  \\
	&= \sum_{n=0}^{\infty} \int_{\samplespacen} \ind[B_n \in A] \ind[\tau = n]
	\cdot B_n  \cdot P^{(n)}_0(\tau = n) \cdot \dif P_0^{(n)}(\cdot \mid \tau = n)   \\
	&= \sum_{n=0}^{\infty} \int_{\samplespacen} \ind[B_{\tau} \in A] \ind[\tau = n]
	\cdot B_n  \,\dif P^{(n)}_0  \\
	&= \int_{\Omega} \ind[B_{\tau} \in A] \left(\sum_{n=0}^{\infty} \ind[\tau=n]
	B_n \right) \,\dif P_0 \\
	&= \int_{\Omega} \ind[B_{\tau} \in A]  B_\tau \,\dif P_0\\
	&\overset{(14)}{=} \int_A t \mugeneral{P_0}{B}{\tau}(dt),
	\end{align*}
	where $(*)$ follows because of our fixed $n$-calibration
	assumption. Furthermore, (3) follows from the following
	equality for any $C \in \F$
	\begin{align}
	P^{(n)}_1(C \cap \{\tau = n \}) = P^{(n)}_1(\tau = n) \cdot P^{(n)}_1(C \mid \tau = n),
	\end{align}
	and in (5) we perform a change of variables where we integrate
	over the possible values of the Bayes Factor instead of over
	the outcome space, which we repeat in (14).
	
	We have shown that the function $g$ defined by \(g(t) = t\) is the Radon-Nikodym
	derivative \(\rnd{\mugeneral{P_1}{B}{\tau}}{\mugeneral{P_0}{B}{\tau}}\).
\end{proof}

\begin{proof}{\bf [of Lemma~\ref{lem:calibali}]}
	Let $A$ be any Borel subset of ${\mathbb R}_{> 0}$. We have:
	\begin{align*}
	\int_A \dif \mumargcond{1}{\gamma}{n}{x^m,\tau =n} & =
	\int_{\samplespacen} \ind[\gamma_n \in A] \dif \bar{P}^{(1)}_n(\cdot \mid x^m,\tau = n)\\
	& =
	\int_{\samplespacen} \ind[\gamma_n \in A] \left(\frac{\dif \bar{P}^{(1)}_n(\cdot \mid x^m,\tau = n)}{\dif \bar{P}^{(0)}_n(\cdot \mid x^m,\tau = n)} \right) \dif \bar{P}^{(0)}_n(\cdot \mid x^m,\tau = n)\\
	& \overset{(*)} {=}
	\int_{\samplespacen} \ind[\gamma_n \in A] \gamma_n \cdot
	\left(\frac{\pi(H_0 \mid x^m, \tau = n)}{\pi(H_1 \mid x^m, \tau = n)} \right) \dif \bar{P}^{(0)}_n(\cdot \mid x^m,\tau = n)\\
	& =
	\int_{\samplespacen} \ind[\gamma_n \in A] \gamma_n  \left(\frac{\bar{P}_0(\tau = n \mid x^m)\pi(H_0)}{
		\bar{P}_1(\tau = n  \mid x^m) \pi(H_1)} \right) \dif \bar{P}^{(0)}_n(\cdot \mid x^m,\tau = n)\\
	& =
	\left(\frac{\bar{P}_0(\tau = n \mid x^m)\pi(H_0 \mid x^m)}{
		\bar{P}_1(\tau = n  \mid x^m) \pi(H_1 \mid x^m)} \right)  \cdot \int_{A} \gamma_n
	\dif \mumargcond{0}{\gamma}{n}{x^m,\tau =n},
	\end{align*}
	where, for the case $m=0$,  $(*)$ follows from \eqref{eq:simplegivenn}, which can be verified to be still valid in our generalized setting. The case $m > 0$ follows in exactly the same way, by shifting the data by $m$ places (so that the new $x_1$ becomes what was $x_{m+1}$, and treating, for $k = 0,1$,  $\pi(H_k \mid x^m)$ as the priors for this shifted data problem, and then applying the above with $m=0$).
	
	We have shown that the Radon-Nikodym derivative
	$\rnd{
		\mumargcond{1}{\gamma}{n}{x^m}}{
		\mumargcond{0}{\gamma}{n}{x^m}}$ at $\gamma_n$
	is given by $\gamma_n \cdot \frac{\bar{P}_0(\tau = n \mid x^m)\pi(H_1 \mid x^m)}{
		\bar{P}_1(\tau = n  \mid x^m) \pi(H_0 \mid x^m)}$, which is what we had to show.
\end{proof}

\begin{proof}{\bf [of Lemma~\ref{lem:strongfixedn}]}
	%Let $\rho$ be relatively
	%  invariant with respect to the group action of $G$ on $\mathcal{X}^j$ for
	%  $j=1,2,\ldots$.
	%  Note that as consequence of Equation~\eqref{eq:quotient-measure}, we
	%  have
	%  % Allard: note that alle absolute continuities goed gaan wrt \rho..
	%Define
	%  $\fmuj0, \fmuj1: \B(\R_{>0}) \to [0,1]$ by
	%  \begin{align}
	%    \label{eq:fmuj}
	%  \fmuj{k}(A) \coloneqq P_{k,g}^{(j)}(\beta_j^{-1}(A))
	%  \end{align}
	%  for $k=0,1$ and $g\in G$.
	%Note that as a consequence of the remark underneath (\ref{eq:quotient-likelihood-ration}),% all values of $g$ yield the same measure in
	%  Equation~\eqref{eq:fmuj}.
	Let $A'$ denote the event $V_n = 1$ and let
	$A \subset {\mathbb R}_{>0}$ be a Borel measurable subset of the
	positive real numbers.
	We have that $\beta_n$ is a function of the maximal invariant $U_n$
	as defined in Definition \ref{def:invariant}, and we write
	$\beta_n(U_n)$.
	With this notation, we have:
	\begin{align}
	&  \mug{1,g}{\beta}{n}(A \mid A')
	= \int_{\R_{>0}} \ind[A] \dif  \mugcond{1,g}{\beta}{n}{A'}
	\nonumber \\
	\nonumber
	&\overset{(2)}{=} \int_{\mathcal{U}_n} \ind[\beta_n(U_n) \in A] \dif \mugcond{1,g}{U}{n}{A'}\\
	&= \int_{\mathcal{U}_n} \ind[\beta_n(U_n) \in A]
	\frac{\dif \mugcond{1,g}{U}{n}{A'} }
	{\dif \mugcond{0,g}{U}{n}{A'} }  \,
	\dif \mugcond{0,g}{U}{n}{A'} \nonumber\\
	&\overset{(4)}{=} \int_{\mathcal{U}_n}  \ind[\beta_n(U^n) \in A]
	\frac{P_{0,g}^{(n)}(A')}{P_{1,g}^{(n)}(A')}\frac{\dif \mug{1,g}{U}{n} }
	{\dif \mug{0,g}{U}{n}}  \,
	\dif \mugcond{0,g}{U}{n}{A'} \nonumber\\
	&\overset{(5)}{=} \frac{P_{0,g}^{(n)}(A')}{P_{1,g}^{(n)}(A')} \cdot \int_{\mathcal{U}_n} \ind[\beta_n(U^n) \in A]       \beta_n(U_n) \,
	\dif \mugcond{0,g}{U}{n}{A'} \nonumber \\
	&= \frac{P_{0,g}^{(n)}(A')}{P_{1,g}^{(n)}(A')} \cdot \int_{\R_{>0}} \ind[A] t \,  \dif \mug{0,g}{\beta}{n}{A'}(t), \nonumber
	\end{align}
	where step~(2) holds because $\beta_n$ is $\G_n$-measurable.
	On the set $A'$ we have
	\begin{align*}
	\frac{\dif \mugcond{1,g}{U}{n}{A'} }
	{\dif \mugcond{0,g}{U}{n}{A'} }
	\frac{P_{1,g}^{(n)}(A')}{P_{0,g}^{(n)}(A')}
	&=
	\frac{\dif \mug{1,g}{U}{n} }
	{\dif \mug{0,g}{U}{n}},
	\end{align*}
	which explains step (4),
	and step~(5) follows from the definition of $\beta_n$ in Equation \eqref{eq:quotient-likelihood-ration}.
	
	We have shown that $\frac{P_{0,g}^{(n)}(A')}{P_{1,g}^{(n)}(A')} \cdot t$ is equal to the Radon-Nikodym derivative $\frac{\dif \mugcond{1,g}{\beta}{n}{V_n = 1}}
	{\dif \mugcond{0,g}{\beta}{n}{V_n= 1}}$, which is what we had to prove.
\end{proof}

\end{document}